\theoremstyle{plain}
\newtheorem{thm}{Theorem}[section]
\newtheorem{cor}[thm]{Corollary}
\newtheorem{lem}[thm]{Lemma}
\newtheorem{prop}[thm]{Proposition}
\theoremstyle{definition}
\newtheorem{example}[thm]{Example}
\newtheorem{dfn}[thm]{Definition}
\def\ie{{i.e.,}\ }
\def\bigskip{\vspace{30pt}}
\newcommand{\p}{\varphi}
\renewcommand{\phi}{\varphi}
\newcommand{\CC}{\mathcal C}
\newcommand{\UX}{\underline X}
\newcommand{\UY}{\underline Y}
\def\hyperef#1{\hyperref[#1]{\ref{#1}}}
\newcommand{\RR}{\mathbb{R}}
\newcommand{\C}{\mathbb{C}}
\newcommand{\FF}{\mathbb{F}}
\newcommand{\R}{\mathcal{R}}
\newcommand{\M}{\mathcal{M}}
\newcommand{\N}{\mathcal{N}}
\newcommand{\Rp}{\mathbb{R}_{>0}}
\renewcommand{\S}{{S}^{1}}
\renewcommand{\l}{\lambda}
\newcommand{\rank}{\operatorname{rank}}
\newcommand{\CI}{\mathcal{C}_1}
\newcommand{\CII}{\mathcal{C}_2}
\newcommand{\ph}{\operatorname{ph}}
\newcommand{\im}{\operatorname{im}}
\newcommand{\backslant}[2]{{\raisebox{-.43em}{$#1$}\left\backslash \raisebox{.3em}{$#2$}\right.}}
\renewcommand{\iff}{\Leftrightarrow}
\renewcommand{\hat}{\widehat}
\renewcommand{\tilde}{\widetilde}
\newcommand{\MM}{\underline{\M}}
\newcommand{\G}{\mathcal G}
\newcommand{\bp}{
  \mathop{
    \vphantom{\bigoplus} 
    \mathchoice
      {\vcenter{\hbox{\resizebox{\widthof{$\displaystyle\bigoplus$}}{!}{$\boxplus$}}}}
      {\vcenter{\hbox{\resizebox{\widthof{$\bigoplus$}}{!}{$\boxplus$}}}}
      {\vcenter{\hbox{\resizebox{\widthof{$\scriptstyle\oplus$}}{!}{$\boxplus$}}}}
      {\vcenter{\hbox{\resizebox{\widthof{$\scriptscriptstyle\oplus$}}{!}{$\boxplus$}}}}
  }\displaylimits 
}
\begin{document}

\title{\Large REALIZATION SPACES OF UNIFORM PHASED MATROIDS}

\author{Amanda Ruiz\\\\Department of Mathematics, \\University of San Diego, \\SAN DIEGO, CA 92110\\\\alruiz@sandiego.edu}




\date{}


\maketitle
%

\begin{abstract}
{
A phased matroid is a matroid with additional structure which plays the same role for complex vector arrangements that oriented matroids play for real vector arrangements.  

The realization space of an oriented (resp., phased) matroid is the space  of vector arrangements in $\RR^n$ (resp., $\C^n$) that correspond to oriented (resp., phased) matroid, modulo a change of coordinates.  According to Mn\"ev's Universality Theorem, the realization spaces of uniform oriented matroids with rank greater than or equal to $3$ can be as complicated as any open semi-algebraic variety. 
   
 In contrast, uniform phased matroids which are not essentially oriented have remarkably simple realization spaces if they are uniform. 
 
We also present a criterion for realizability of uniform phased matroids that are not essentially oriented.

}
\end{abstract}



%
\section{Introduction}

A matroid is a combinatorial object that abstracts the notion of linear independence in a vector configuration over an arbitrary field $\FF$.  Oriented matroids are matroids with additional structure which encodes more geometric information than matroids by assigning to each ordered basis an element of $\{-1,0,+1\}$.   The theories of matroids and oriented matroids are major branches of combinatorics with applications in many fields of mathematics, including topology, algebra, graph theory, and geometry.   Their contribution to mathematics motivates the question \emph{what is a complex matroid?} (In fact,  Ziegler wrote a paper with this title \cite{ziegler}).

Several mathematicians have addressed the question posed by Ziegler, with varying answers. In \cite{ziegler}, Ziegler  proposed his own definition of a complex matroid. His complex matroids are discrete objects with desirable characteristics consistent with being a generalization of oriented matroids, but they lack a notion of cryptomorphisms, an important characteristic of matroids and oriented matroids. 

Another approach to Ziegler's question is to generalize oriented matroids by assigning to each ordered basis an element of the set $\S\cup\{0\}$. 
In  \cite{dressw}, Dress and Wenzel show that  this non-discrete approach corresponds to basis ``orientations" over the fuzzy ring $\C//\Rp$, of which $\S \cup \{0\} $ is a subset.  
Phased matroids (referred to as \emph{complex matroids} in \cite{andersondel,BKR-G}) are an answer developed by Anderson and Delucchi  \cite{andersondel}, growing out of work by Below, Krummeck and Richter-Gebert \cite{BKR-G} and building on Delucchi's diploma thesis\cite{Delucchi}. 
In \cite{andersondel}, Anderson and Delucchi explore how much of the foundations of oriented matroids can be paralleled with the structure of the set $\S\cup\{0\}$.  They give a phased analog to the chirotope and circuit axioms of oriented matroids, and show that the definitions are cryptomorphic. A recent paper of Baker and Bowler \cite{baker}, yet to appear as of the writing of this introduction, generalizes oriented matroid and phased matroid properties into matroids over hyperfields, proving the definitions are cryptomorphic on the hyperfield level, and uniting the fields of oriented matroids, phased matroids, and matroids over hyperfields other than signs of real numbers and phases of complex number. 

Baker and Bowler's paper is already expected to be important by allowing common properties of phased matroids and oriented matroids to be proven once, on the hyperfield level.

In this paper, we will focus on some of the differences between phased matroids and oriented matroids that can not be proven on at the hyperfield level, but rely on the extra degree of freedom allowed over the complex numbers that is not allowed for over the real numbers.  Some long term goals of phased matroid theory are to apply it to complex vector bundles and complex hyperplane arrangements in a way analogous  to the applications of oriented matroids to real hyperplane arrangements and real vector bundles.

A celebrated theorem in oriented matroid theory is Mn\"ev's Universality Theorem \cite{mnev}, which describes how complicated the topology of realization spaces of oriented matroids (including uniform oriented matroids) can be. An obvious question for phased matroid theory,  is: How does the topology of  realization spaces of phased matroids compare to that of oriented matroids?

This paper sets out to answer that question in the case of uniform phased matroids that are not essentially oriented. 
Surprisingly, the realization spaces of uniform phased matroids that are not essentially oriented are remarkably simple. In fact, regardless of the rank of a uniform phased matroid with a groundset of size $n$, if the phased matroid is not essentially oriented then the realization space is homeomorphic to $\Rp^{n-1}$ (Theorem \ref{uniformthm}).


Determining realizability of an oriented matroid is NP hard (\cite{redbook}, Theorem 8.7.2). In contrast, in Theorem \ref{realizability} we give a polynomial time algorithm for determining realizability of a uniform phased matroid that is not essentially oriented. This algorithm also, in the affirmative case, gives a unique canonical realization of the phased matroid. 
 


\section{Phased matroid definitions}

We  will use polar coordinates to  denote nonzero complex numbers. Thus we will view $\C\backslash\{0\}$ as $\Rp\times \S$, where $\S=\{z\in\C\mid |z|=1\}$ is the complex unit circle.

\begin{dfn}[Phase of a complex number~\cite{BKR-G}]  Let  $s\in \Rp,$ and $ \alpha\in\S$. For the  complex number $z=s\alpha\in\C$ the \emph{phase} of $z$ is  
$$\ph(z)=\begin{cases} 0\mbox{ if } s=0\\  \alpha\mbox{ if } s\ne0.\end{cases}$$ Thus $\ph(z) \in \S\cup\{0\}$.
Also, $s$ is the \emph{norm} of $z$.
\end{dfn}

\begin{dfn}[Hypersum \cite{baker}] Define the \emph{hypersum}  $\boxplus(S)=\displaystyle\bp_{k=1}^{m}\alpha_k$ of a finite set $S=\{\alpha_1,\ldots,\alpha_m\} \subset S^1 \cup \{0\}$ to
be the set of all phases of strictly positive linear combinations of $S$. Thus
 \begin{itemize}
\item $\boxplus(\emptyset) =\emptyset$.
\item $\boxplus(\{\mu\}) = \{\mu\} $ for all $\mu$.
\item $\boxplus(\{\mu, -\mu\}) =  \boxplus(\{ \mu,0, -\mu\})=\{\mu,0,  -\mu\}$ for all $\mu$.
\item if $S=\{e^{i\alpha_1},\ldots,e^{i\alpha_k}\}$ with $k\ge2$ and $\alpha_1 <\cdots<\alpha_k<\alpha_1+\pi$   then
$\boxplus(S)=\boxplus(S\cup\{0\})=\{e^{i\lambda} :\alpha_1 <\lambda<\alpha_k\}$.
\item if $S=\{e^{i\alpha_1},\ldots,e^{i\alpha_k}\}$ with $k\ge3$ and $\alpha_1<\cdots<\alpha_k=\alpha_1+\pi$, then $\boxplus(S)=\boxplus(S\cup\{0\})=\{e^{i\lambda}:\alpha_1<\lambda<\alpha_k\}$. 
\item otherwise (\ie if the nonzero elements of $S$ do not lie in a closed half-circle of $S^1$), $\boxplus(S) = S^1 \cup \{0\}.$
\end{itemize}
\end{dfn}
Note that in \cite{andersondel} the hypersum is reffered to as \emph{phased convex hull}.
\begin{dfn}[Phirotope \cite{andersondel}]\label{phirodef} A rank $r$ \emph{phirotope} on the set $E$  is a non-zero, alternating function $\p:E^r\rightarrow \S\cup \{0\}$ satisfying the following \emph{combinatorial complex Grassmann--Pl\"ucker relations}.
\begin{quote}For any two subsets $\{x_1,\ldots,x_{r+1}\},\{y_1,\ldots,y_{r-1}\}$ of $E$, 
\begin{equation}\label{gprels}
0\in\displaystyle\bp_{k=1}^{r+1}(-1)^k\p(x_1,\ldots,\hat{x}_{k},\ldots,x_{r+1})\cdot\p(x_k,y_1,\ldots,y_{r-1}).\end{equation}\end{quote}

 \end{dfn}
 
 The phirotope definition is a generalization of a \emph{chirotope} in which $\im(\phi)\subseteq\{-1,0,+1\}$. 

As suggested by the name, the combinatorial complex Grassmann-Pl\"ucker relations are motivated by the Grassmann-Plucker relations, which the minors of every matrix in $C^{r\times n}$ with $r\le n$ satisfy. So every such matrix gives rise to a phirotope. 
 
 \begin{example}\label{runex} Consider the $3\times 5$ matrix \[M= \left(
\begin{array}{ccccccc}
    1& {}  0& 0& \frac{1}{2}e^{i\frac{\pi}{4}}&{\frac{1}{3}e^{i\frac{\pi}{2}}}  \\
 0 & {1}  & 0& {1} &  { \frac{4}{3}e^{i\frac{\pi}{4}}}   \\
 0 & {0}  & 1 & {0} &  {{-1}}  
\end{array}
\right)\] with complex entries.
The function $\p_M:[5]^3\rightarrow S^1\cup\{0\}$ such that $\p_M(i,j,k)=\ph(\det(M_{i,j,k}))$ satisfies the combinatorial complex Grassmann-Pl\"ucker relations and is a phirotope. There is a phirotope for any $n\times r$ matrix with $n\ge r$.  
 \end{example}
 
 \begin{prop}[Phirotope of a matrix \cite{BKR-G}]\label{phiromatrix}Every matrix $M\in\C^{r\times |E|}$ with $|E|\ge r$ gives rise to the following function:
\[\begin{array}{rccl}
\p_M:&E^r&\rightarrow&\S\cup\{0\}\\
&\l=(\lambda_1,\ldots,\lambda_r)&\mapsto&\ph([{\lambda}]_M).\end{array}\]  Furtheremore, $\p_M$ is a phirotope.
\end{prop}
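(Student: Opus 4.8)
The plan is to check, in turn, the three requirements of Definition~\ref{phirodef} for $\p_M$: that it is alternating, non-zero, and satisfies the combinatorial complex Grassmann--Pl\"ucker relations \eqref{gprels}. The only inputs needed are two elementary identities for phase, namely $\ph(zw)=\ph(z)\ph(w)$ and $\ph(-z)=-\ph(z)$ for all $z,w\in\C$, together with the classical Grassmann--Pl\"ucker identity satisfied by the maximal minors of any complex matrix (as recalled in the paragraph before Example~\ref{runex}). Write $[\l]_M$ for the determinant of the $r\times r$ submatrix of $M$ whose $j$-th column is the $\l_j$-th column of $M$, so that $[\cdot]_M$ is itself an alternating function on $E^r$ and $\p_M=\ph\circ[\cdot]_M$. (I also take as implicit in the statement that $\rank(M)=r$; without this every maximal minor vanishes, $\p_M\equiv 0$, and no phirotope exists.)

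Both easy clauses follow at once. If $\l$ has two equal entries then the submatrix has a repeated column, so $[\l]_M=0$ and $\p_M(\l)=0$; if $\l'$ is obtained from $\l$ by a transposition then two columns of the submatrix are swapped, so $[\l']_M=-[\l]_M$ and $\p_M(\l')=\ph(-[\l]_M)=-\p_M(\l)$. Thus $\p_M$ is alternating. And $\p_M\not\equiv 0$ because $\rank(M)=r$ forces some maximal minor to be non-zero.

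The content is in \eqref{gprels}. Fix $\{x_1,\dots,x_{r+1}\}$ and $\{y_1,\dots,y_{r-1}\}$ in $E$, and put
\[
z_k=(-1)^k\,[x_1,\dots,\hat{x}_k,\dots,x_{r+1}]_M\cdot[x_k,y_1,\dots,y_{r-1}]_M\qquad(1\le k\le r+1),
\]
so that, by multiplicativity of $\ph$, the $k$-th term of the hypersum in \eqref{gprels} is exactly $w_k:=\ph(z_k)$. The classical Grassmann--Pl\"ucker identity gives $z_1+\dots+z_{r+1}=0$ (apply the linear functional $u\mapsto\det(u,M_{y_1},\dots,M_{y_{r-1}})$ to the dependence $\sum_k(-1)^k\det(M_{x_1},\dots,\widehat{M_{x_k}},\dots,M_{x_{r+1}})\,M_{x_k}=0$ among $r+1$ vectors in $\C^r$, which is itself a cofactor expansion of an $(r+1)\times(r+1)$ determinant with two equal rows). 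Since $z_k=|z_k|\,w_k$, we obtain $\sum_k|z_k|\,w_k=0$. Let $S'=\{\,w_k : z_k\ne0\,\}\subseteq\S$. If $S'=\emptyset$, then every $z_k=0$, every $w_k=0$, and the hypersum in \eqref{gprels} is $\boxplus(\{0\})=\{0\}\ni 0$. If $S'\ne\emptyset$, then collecting equal terms rewrites $\sum_k|z_k|\,w_k=0$ as $\sum_{\alpha\in S'}c_\alpha\,\alpha=0$ with $c_\alpha=\sum_{k:\,w_k=\alpha}|z_k|>0$; this exhibits $0=\ph(0)$ as the phase of a strictly positive linear combination of $S'$, so $0\in\boxplus(S')$, and since adjoining $0$ to a nonempty set leaves its hypersum unchanged (immediate from the definition of $\boxplus$), $0$ also lies in the hypersum of $\{w_1,\dots,w_{r+1}\}$. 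In every case \eqref{gprels} holds, and $\p_M$ is a phirotope.

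The one step that genuinely requires care — the main (and essentially only) obstacle — is the last one: passing from the \emph{equation} $\sum_k z_k=0$ to the \emph{membership} $0\in\boxplus(\{\ph(z_k)\})$. The hypersum is built from \emph{strictly} positive combinations of a \emph{set} of phases, whereas $\sum_k|z_k|\,w_k=0$ is an a priori non-strict combination of a multiset, so one must first discard the vanishing terms and amalgamate repeated phases, as above, before the definition of $\boxplus$ applies. Everything else is routine bookkeeping with determinants and their phases, and the classical Grassmann--Pl\"ucker identity may simply be quoted.
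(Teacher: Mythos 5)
Your proof is correct and takes essentially the route the paper itself only gestures at (it gives no proof, citing \cite{BKR-G} and the remark before Example \ref{runex} that maximal minors satisfy the classical Grassmann--Pl\"ucker identity): you quote that identity and then carefully convert the vanishing sum $\sum_k z_k=0$ into the membership $0\in\boxplus(\{\ph(z_k)\})$, handling the zero terms and the multiset-versus-set issue, which is exactly the bookkeeping the definition of the hypersum requires. Your caveat that one must implicitly assume $\rank(M)=r$ (otherwise $\p_M\equiv 0$ and is not a phirotope) is a correct and worthwhile sharpening of the statement as printed.
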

 
 Notice that if $M\in\RR^{r\times|E|}$, then $\phi_M$ is a chirotope.
 
 Consider $M\in\C^{n\times r}$. Let $A\in GL(r,\C)$. Then $\p_{AM}=\ph(\det(A))\p_{M}$. So multiplication on the left by $GL(r,\C)$ rotates the coordinate system.

\begin{dfn}[Phased matroid \cite{andersondel}]\label{PMdefphiro} The set  $\M:=\{\alpha\p\mid \alpha\in\S\}$ is a \emph{phased matroid}. \end{dfn} 

 If $\p$ is a chirotope, then $\{-\p,\p\}$ is an \emph{oriented matroid.}

\begin{dfn}[Realizable phased matroid, realization of a phased matroid]
 Let $\M$ be a phased matroid with  a phirotope $\p$. $\M$ is \emph{realizable} if there exists a matrix $M$ such that $\p_M=\p$. The matrix $M$ is a \emph{realization} of $\M$.  We can also say that the phirotope $\p$ is realizable with realization $M$.
\end{dfn} 
Define the \emph{equivalence class of a matrix }as $[M]:=\{AM|A\in GL(r,\C)\}$. Notice that if $M$ is a realization of $\M$, then every element in the equivalence class $[M]$ is a realization of $\M$. 

\begin{example} The matrix $M$ in Example \ref{runex} is a realization of the realizable phased matroid $\{\alpha\p_M\mid \alpha\in\S\}$ denoted $\M_M$. Also, for any invertible $3\times 3$ matrix $A$, $AM$ is a realization of $\M_M$.
\end{example}

\begin{dfn}[Underlying matroid of a phased matroid \cite{andersondel}] Let $\M$ be a rank $r$ phased matroid with phirotope $\p$. Then $\MM:= \{\{i_1,\ldots,\i_r\}|\p(i_1,\ldots,i_r)\ne0\}$ is a matroid called the underlying matroid of $\M$. 
\end{dfn}

If $\M$ is realizable with realization $M$, then its underlying matroid $\MM$ is realizable with realization $M$.

\begin{example} The underlying matroid of $\underline \M_M$ obtained from the matrix $M$ in Example \ref{runex} is 
\[\left\{\{1,2,3\}, ~\{1,2,5\}, ~\{1,3,4\}, \{1,3,5\},~\{1,4,5\}, ~\{2,3,5\}, ~\{3,4,5\}\right\}.
\]
\end{example}

 
 \subsection{Remark about uniform phased matroids}
 
 Let $\p$ be a phirotope. There is an underlying matroid, $\underline{\M}$, with bases $\mathcal B:=\{\lambda|\p(\lambda)\ne0\} $.  If $\im(\p)\subseteq\S$, then $\p$ and $\MM$ are \emph{uniform}.
 
 The matrix $M$ in Example \ref{runex} gives rise to a realizable phased matroid $\M(M)$ which is not uniform because $\p(1,2,4)=0$. 
 
 
 \section{The realization space}
 
 \begin{dfn}[Realization space]\label{Rspace} The \emph{realization space} over $\C$ of a rank $r$ phased matroid $\M$ on the ground set $E$   
is the quotient space 
$$\mathcal{R}_\C(\M)=
\backslant{GL(r,\C)}{\{M\in \C^{r\times |E|}\mid \text{ M is a realization of } \M\}}.$$

If $\M$ is an oriented matroid, 
$$\mathcal{R}_\RR(\M)=
\backslant{GL(d,\RR)}{\{M\in \RR^{r\times |E|}\mid \text{ M is a realization of } \M\}}.$$
\end{dfn}

For the special case of oriented matroids, realization spaces  have been well studied. Results from oriented matroid theory apply to the phased matroids that are also oriented matroids.

\begin{thm}[Mn\"ev's Universality Theorem \cite{redbook}, Theorem 8.6.6]\label{mnevs}\text{ }
\begin{enumerate}
 \item Let $V\subseteq\RR^s$ be any semialgebraic variety. Then there exists a rank $3$ oriented matroid $\M$ whose realization space $\R(\M)$ is homotopy equivalent to $V$.
\item If $V$ is an open subset of $\RR^s$, then $\M$ may be chosen to be uniform.
\end{enumerate}
\end{thm}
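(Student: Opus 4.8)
The plan is to prove Mn\"ev's theorem by the classical method of \emph{von Staudt constructions}: encode the arithmetic of $\RR$ geometrically through point--line incidences in the projective plane, so that an arbitrary system of polynomial equations and inequalities is translated into a prescribed combinatorial incidence pattern, i.e.\ a rank $3$ oriented matroid $\M$, whose realizations are exactly the configurations solving the system. The technical glue is the notion of \emph{stable equivalence} of semialgebraic sets (chains of rational coordinate changes and of projections whose fibers are open cells); one uses throughout that stable equivalence preserves homotopy type.

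First, I would reduce $V$ to a normal form: by elementary manipulations (introducing slack variables, splitting high-degree monomials) show that any semialgebraic $V\subseteq\RR^s$ is stably equivalent to the common solution set $V'$ of a system built only from equations of the types $x_k=1$, $x_i+x_j=x_k$, and $x_i\cdot x_j=x_k$ (Shor's normal form), together --- in the open case --- with strict inequalities $x_i>0$. Second, build the configuration: fix a projective frame (four points in general position) supplying homogeneous coordinates; for each variable $x_i$ place a movable point $P_i$ on a fixed line, its position recording $x_i$; and for each equation of the system insert the classical \emph{addition gadget} or \emph{multiplication gadget}, a bounded set of auxiliary points together with the lines through prescribed triples, whose incidence theorem holds \emph{precisely} when the relevant $P_i$ satisfy that equation. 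Record $\M$ as the list of signs of all $3\times 3$ determinants of the configuration, and check both directions: every realization of $\M$ forces all the gadget incidences, hence gives a point of $V'$; conversely every point of $V'$ extends, uniquely once the frame is fixed, to a realization of $\M$. This exhibits $\R_\RR(\M)$ as fibering over $V'$ with contractible fiber (the residual gauge freedom after quotienting by $GL(3,\RR)$), yielding a stable equivalence $\R_\RR(\M)\simeq V'\simeq V$ and proving part (1).

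For part (2) one must make $\M$ uniform, whereas the construction above has many forced collinearities (the gadgets). When $V$ is open, I would work with the strict-inequality normal form, replace each rigid gadget by an ``inequality'' version in which the defining incidence is relaxed to an open halfplane condition, and then push all remaining points into general position; one shows this perturbation alters the realization space only by a product with an open ball --- hence not its homotopy type --- while making every $3\times 3$ minor nonzero, so the perturbed matroid is uniform.

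I expect the main obstacles to be exactly these last two points. The first is establishing rigorously that the incidence gadgets \emph{characterize} the algebraic relations and that the induced map of spaces is a genuine stable equivalence with open-cell fibers (the ``$\Leftarrow$'' direction --- that a configuration realizing $\M$ must satisfy the equations --- is where all the projective incidence theorems are used). The second, and harder, is the uniform case: carrying out the general-position perturbation while certifying that homotopy type is preserved requires careful bookkeeping of which $3\times 3$ minors can change sign under the perturbation and showing this freedom is confined to a contractible factor.
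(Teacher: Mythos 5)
The paper does not prove this statement: it is Mn\"ev's Universality Theorem, quoted verbatim with a citation to \cite{redbook}, Theorem 8.6.6, and used only as a black box to contrast with the phased-matroid results. So there is no in-paper argument to compare yours against; the comparison has to be with the standard published proof, and your outline (Shor's normal form, von Staudt addition and multiplication gadgets on a fixed projective frame, stable equivalence preserving homotopy type) is indeed that standard route.

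As a proof, however, the proposal has genuine gaps exactly at the two points you flag, and they are the substance of the theorem. For part (1), the direction ``every realization of $\M$ forces the gadget incidences and hence a point of the variety'' is not automatic from recording signs of $3\times 3$ determinants: the chirotope only fixes the order type, so realizations can reorder the computation points or send them to configurations where the von Staudt constructions compute different quantities. The known proofs handle this by normalizing the frame and carefully controlling the order of the variable points on the computation line (Shor's ``standard position''), and only then is the map from the realization space to the solution set shown to be a stable equivalence with open-cell fibers; asserting that the gauge freedom is ``contractible'' skips the part of the argument where most of the work lies. For part (2), ``push all remaining points into general position'' does not work as stated: perturbing a configuration with forced collinearities generally changes the realization space drastically (it is not even clear the perturbed uniform matroid is realizable with the intended space), and homotopy type is not preserved by such a perturbation in general. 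The actual uniform case is built into the construction from the beginning --- for an open $V$ one replaces the equality constraints by inequality (open half-plane) descriptions at the gadget level and reads off a uniform oriented matroid from a generic cell, proving stable equivalence for that construction directly, rather than deriving uniformity a posteriori from the non-uniform matroid of part (1). So your road map is the right one, but the two ``obstacles'' you defer are precisely the theorem, and the uniform case in particular cannot be completed by the perturbation argument you sketch.
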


However, when we consider uniform phased matroids that are not essentially oriented, the topology of the realization space is surprisingly simple. 
\begin{thm}\label{uniformthm}
Let $\M$ be a rank $r$, uniform, not essentially oriented, realizable phased matroid on $[n]$. Then $\R(\M)\cong\Rp^{n-1}$. \end{thm}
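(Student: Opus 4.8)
The plan is to exhibit, for any realizable uniform phased matroid $\M$ on $[n]$ that is not essentially oriented, a canonical form for its realizations and thereby identify $\R(\M)$ explicitly with $\Rp^{n-1}$. First I would use the $GL(r,\C)$-action to normalize any realization $M$. Since $\M$ is uniform, every $r\times r$ minor of $M$ is nonzero; choosing the first $r$ columns as a basis, left-multiplication by the inverse of that submatrix puts $M$ in the form $[\,I_r \mid N\,]$ where $N$ is an $r\times(n-r)$ matrix with all entries nonzero and all maximal minors (of the full $r\times n$ matrix) nonzero. The residual freedom is then exactly the subgroup of $GL(r,\C)$ preserving this reduced form, which is the diagonal torus acting by scaling rows (and simultaneously scaling the columns of $N$); so $\R(\M)$ is the set of such $N$ modulo rescaling rows and columns by elements of $\C\setminus\{0\}$.

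Next I would pin down the phases. The phirotope $\p = \p_M$ is determined up to a global phase, so I may further use one $\S$-worth of scaling to fix, say, $\p(1,\ldots,r)=1$, i.e. the normalization $[\,I_r\mid N\,]$ with $\det$ of the identity block equal to $1$. The key structural input is the hypothesis \emph{not essentially oriented}: this should mean precisely that no single element of $\S$ rescales $\p$ into a chirotope, equivalently (after the row/column torus normalization) that the entries of $N$ cannot all be made real by the remaining torus. I would show that for a uniform realizable phased matroid, the Grassmann--Plücker relations force the phase data of $N$ to be extremely rigid: because each $3$-term (for $r=1$) or more generally each basic exchange relation $0 \in \boxplus\{\ldots\}$ with nonzero summands must contain $0$ in the hypersum, and since all summands are nonzero, the summands must span more than a closed half-circle OR cancel in the $\{\mu,-\mu\}$ fashion. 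The "not essentially oriented" condition rules out the all-real (half-circle-collapsing) configurations globally, which I expect forces, for each Plücker relation, that the phases are pinned down uniquely by the norms up to the torus. Concretely, I would prove that once the norms $|N_{ij}|$ are fixed, the phases $\ph(N_{ij})$ are uniquely determined (given the normalization) — so a realization is determined by its matrix of positive norms modulo the positive real torus.

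Then the identification is a dimension count: the positive-norm data of $N$ lives in $\Rp^{r(n-r)}$, the positive real row-rescalings give an $\Rp^{r}$-action and the positive real column-rescalings an $\Rp^{n-r}$-action, but the two share a one-dimensional overlap (global positive scalar), so the quotient is $\Rp^{\,r(n-r) - r - (n-r) + 1} = \Rp^{\,(r-1)(n-r-1)}$ — which is \emph{not} $\Rp^{n-1}$ in general, so this naive count must be wrong, and the real content of the theorem is that the Plücker relations further constrain the norm data. I would therefore instead show directly: given the ($n{-}1$ worth of) free positive parameters — I expect these to be, after normalization, the norms of the entries in the first nontrivial column together with a single chain of ratios — every other entry's norm AND phase is forced by the Grassmann--Plücker relations, yielding a homeomorphism $\R(\M)\to\Rp^{n-1}$ by reading off those free parameters. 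The main obstacle, and the crux of the argument, is establishing this rigidity: showing that in the uniform, not-essentially-oriented case the complex Grassmann--Plücker relations (each asserting $0$ lies in a hypersum of nonzero phases) propagate a single consistent solution for all phases given the free norms, and that conversely every choice of free norms extends to an honest complex matrix realization. I would handle this by an induction on $n$ via single-element extension (deleting the last column, applying the inductive canonical form, and checking that re-adding the last column forces its $r$ entries — norm and phase — uniquely subject to $n{-}r$... no: subject to exactly one new positive real parameter — from the new Plücker relations), with the base case $n = r+1$ (or $r=1$) checked by hand.
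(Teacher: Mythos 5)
Your proposal does not reach the actual content of the theorem, and it starts from a misframed description of the realization space. After you normalize the first $r$ columns to the identity, each $GL(r,\C)$-orbit contains exactly one matrix of the form $(I|N)$, so $\R(\M)$ is literally the set of admissible matrices $N$; there is no further quotient to take. Rescaling columns by arbitrary elements of $\C\setminus\{0\}$ is not available (a non-real column scaling is a rephasing and changes the phased matroid; only positive real scalings preserve the phirotope), and the positive real row/column scalings should not be quotiented out: they act on the set of admissible $N$, moving the $n-1$ norms attached to a spanning tree of the associated bipartite graph (Theorem \ref{bipartite}, Corollary \ref{realspacecanonical}), and that orbit is precisely the $\Rp^{n-1}$ the theorem asserts. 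Your count $\Rp^{(r-1)(n-r-1)}$ is a symptom of this misframing, not of hidden Pl\"ucker constraints on a quotient; likewise the ``converse'' you list as part of the crux---that every choice of the $n-1$ free positive parameters is realized---needs no propagation argument at all, since it follows by scaling the rows and columns of any single realization by positive reals.

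The genuine gap is the rigidity statement itself, which your proposal only asserts (``I expect\dots'', ``propagate a single consistent solution''). What must be shown is that once the spanning-tree entries are set to $1$, the canonical realization is a single point: the phases of all entries of $N$ are signed maximal-minor phases and hence are read directly off the phirotope (Corollary \ref{phaseminor})---they are not deduced from the norms---and each remaining norm $s_{i,j}$ is determined by a triangular equation $\gamma=\ph(s_1\alpha-s_2\beta)$ with $\alpha\neq\pm\beta$ arising from a suitable $2\times2$ minor, solved by the Triangle Lemma (Lemma \ref{triangle}). The hypothesis ``not essentially oriented'' enters exactly here, guaranteeing a non-real entry, and one needs the case analysis of the paper (non-real phase at the entry itself, in its row, in its column, or elsewhere) to produce, for every unknown norm, such an equation in which it is the only unknown. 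Your substitute---induction on $n$ via single-element deletion---does not obviously deliver this: deleting an element of a uniform, not essentially oriented phased matroid can leave an essentially oriented minor (for instance when only one column of the canonical form carries non-real phases), so the inductive hypothesis can fail exactly when you need it; moreover the base case $n=r+1$ is vacuous, since every uniform phased matroid on $r+1$ elements is essentially oriented (all entries of the single column of $N$ lie on the spanning tree and can be scaled to $1$). Without the Triangle Lemma mechanism, or some equivalent, the central step of the proof is missing.
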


The proof of Theorem \ref{uniformthm} will appear in Section \ref{sec:proof}. As a motivating Lemma, we will prove the rank 2 case (in which we can drop conditions of uniformality and not essentially orientetability.)
\begin{lem}\label{rigid}
Let $\M$ be a simple, rank $2$, realizable phased matroid on $n$ elements. Then $\R(\M)\cong\Rp^{n-1+k}$ where  $k=0$ if $\M$ is not essentially oriented and $k>0$ if $\M$ is essentially oriented. \end{lem}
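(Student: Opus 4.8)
The plan is to normalize a realization matrix $M \in \C^{2 \times n}$ of $\M$ using the $GL(2,\C)$ action and then count the remaining degrees of freedom. Since $\M$ is simple and rank $2$, every pair of columns of $M$ is linearly independent, so in particular no column is zero and no two columns are scalar multiples of one another. First I would use the $GL(2,\C)$ action to put $M$ into a canonical form: multiply on the left to send the first two columns to the standard basis vectors $e_1, e_2$ — this is possible precisely because $\{1,2\}$ is a basis — thereby fixing the representative of $[M]$ uniquely modulo nothing further (the stabilizer of an ordered basis in $GL(2,\C)$ is trivial). So $\R(\M)$ is homeomorphic to the set of matrices of the form $\left(\begin{smallmatrix} 1 & 0 & a_3 & \cdots & a_n \\ 0 & 1 & b_3 & \cdots & b_n\end{smallmatrix}\right)$ that realize $\M$, where all $a_j, b_j$ are nonzero (simplicity) and all $a_j/b_j$ are distinct.

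Next I would translate the phirotope conditions into constraints on the entries. For $j \geq 3$, the prescribed phase $\p(1,j) = \ph(b_j)$ pins down $\arg(b_j)$, and $\p(2,j) = \ph(-a_j) $ pins down $\arg(a_j)$; together with the $2 \times 2$ minors $\p(j,k) = \ph(a_j b_k - a_k b_j)$ for $3 \le j < k \le n$, these are all the phirotope values. Writing $a_j = r_j e^{i\alpha_j}$ and $b_j = s_j e^{i\beta_j}$ with $\alpha_j, \beta_j$ fixed, the free parameters are the $2(n-2)$ positive reals $r_3, s_3, \ldots, r_n, s_n$, subject to the minor-phase conditions. The key computation is to understand, for each pair $j < k$, what locus in the $(r_j, s_j, r_k, s_k)$ positive orthant is cut out by fixing $\ph(a_j b_k - a_k b_j)$. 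This is where the dichotomy between essentially oriented and not essentially oriented enters: if $\M$ is not essentially oriented, the phases $\p(1,j)$ and $\p(2,j)$ are generic enough that each such condition is a single real-codimension-one equation that can be solved, and a careful bookkeeping shows the solution set is a graph over $\Rp^{n-1}$ — one degree of freedom per column after $e_1,e_2$, minus an overall scaling that was already used up, or rather $n - 1$ surviving positive scalars; whereas in the essentially oriented case some minor-phase conditions become vacuous or degenerate (the relevant vectors become real-proportional), freeing up extra positive parameters $k > 0$.

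The main obstacle I anticipate is the bookkeeping in this last step: carefully showing that the system of minor-phase equations, viewed as equations in the positive reals $r_j, s_j$, has solution set homeomorphic to exactly $\Rp^{n-1}$ in the not-essentially-oriented case, with no hidden incompatibilities (this uses realizability as a hypothesis, so the system is consistent) and no extra components. Concretely, I expect to fix $r_3, \ldots, r_n$ freely as $n-2$ positive parameters, solve uniquely for each $s_j$ from $\p(1,j)$ versus $\p(2,j)$... no — more likely the right move is to peel off the conditions one column at a time, using the already-placed columns $3, \ldots, j-1$ to see that the phase of each new minor $\p(\ell, j)$ with $\ell < j$ forces $a_j, b_j$ to lie on a prescribed ray from the origin in $\C$ in a compatible way, leaving a single positive scalar of freedom for column $j$; combined with one global positive scalar this yields the exponent $n-1$. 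Isolating precisely where "not essentially oriented" guarantees these rays are consistent and nondegenerate — as opposed to the oriented case where extra slack appears — is the crux, and I would want to make the definition of "essentially oriented" do exactly that work, presumably via the observation that a rank-$2$ phased matroid is essentially oriented iff all its phirotope values, after a global rotation, lie in $\{-1, +1\}$, equivalently iff there is a single complex line containing a realization up to real scaling of columns.
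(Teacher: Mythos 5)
Your setup---killing the $GL(2,\C)$ action by sending columns $1,2$ to $e_1,e_2$, observing that the phirotope pins the phases of every remaining entry, and treating the $2(n-2)$ positive moduli as unknowns subject to the minor-phase conditions---is the same skeleton the paper uses (the paper additionally rephases the columns into a canonical form with the $n-1$ spanning-tree entries equal to $1$, which is what cleanly splits off the $\Rp^{n-1}$ factor via Corollary \ref{realspacecanonical}). The genuine gap is precisely the step you yourself flag as the ``main obstacle,'' and the route you sketch through it rests on a false claim: it is not true that non-essential-orientation makes each condition $\p(j,k)=\ph(a_jb_k-a_kb_j)$ a nondegenerate real-codimension-one constraint. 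For example, after normalization $\M$ may have many columns whose entries are all positive reals while a single other column is genuinely complex; such an $\M$ is not essentially oriented, yet for two all-real columns $j,k$ the condition only records the sign of $r_js_k-r_ks_j$, an open condition that cuts down nothing. So pairwise ``genericity of the phases'' cannot carry the argument. What is needed is the propagation mechanism the paper isolates as the Triangle Lemma \ref{triangle}: an equation $\gamma=\ph(s_1\alpha-s_2\beta)$ with $\alpha\ne\pm\beta$ and one of $s_1,s_2$ known determines the other uniquely. Non-essential-orientation guarantees (after the canonical rephasing) at least one column $j$ with non-real phase ratio $\alpha_j\ne\pm1$; its modulus is pinned by the triangular equation $\p(3,j)=\ph(1-s_j\alpha_j)$ against the column of ones, and then every other column $k$---including the all-real ones, whose conditions among themselves are vacuous---is pinned by a triangular equation against this one non-real column, with a small case split (use $\p(3,k)$ when $\alpha_k\ne\pm1$, and $\p(k,j)$ with the known $s_j$ when $\alpha_k=\pm1$). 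Without this case analysis your ``each new column is forced onto a prescribed ray'' step is unproved, and your first instinct (fix $r_3,\dots,r_n$ freely and solve uniquely for the $s_j$) is incorrect as stated.

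Two further points. Your closing characterization of essential orientation is off: it requires a rephasing $\rho\in(\S)^n$, i.e.\ independent unit rescalings of the columns, not a single global rotation of the phirotope values, so ``all values in $\{-1,+1\}$ after a global rotation'' is sufficient but not necessary. And the essentially oriented case $k>0$ is only gestured at (``some conditions become vacuous''); the paper does not re-derive it from the moduli equations but reduces it to oriented-matroid theory via Proposition \ref{chirotopal}, Theorem \ref{equivrealspace} and Corollary \ref{chiro}, using that rank-$2$ oriented matroids have well-understood (contractible) realization spaces. You would need either that reduction or an honest count of the slack in the degenerate sign conditions to justify $k>0$.
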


{This result, for rank $2$ uniform  phased matroids that are not essentially oriented,  was previously proven in \cite{BKR-G} using cross ratios. Since cross ratios do not generalize to higher dimensions, their proof is not generalizable to phased matroids with  rank $> 2$. For essentially oriented phased matroids the Lemma follows from the fact that all rank-2 oriented matroids have contractible realization spaces \cite{redbook}. We will show an alternative proof for the not essentially oriented case in Section \ref{sec:proof}.
 }
 
%
First, in the following section, we will develop   tools which will be useful in proving the above statements.


\section{Useful tools}
In this section, we will introduce some tools that help prove Theorem \ref{uniformthm}.  We will see how some of the content and structure of a potential realization is determined by the phirotope. In fact,  in Section \ref{canonicalformsec} we will be on the hunt for a \emph{canonical form} of a realization of a phased matroid. In Section \ref{dellemma}, through the Triangle Lemma we see the geometry of the complex space that real space does not have. This provides  insight into why the essentially oriented phased matroids have realization spaces so different from the non-essentially oriented phased matroids. The definitions in this section refer to uniform phased matroids, but they can all be generalized to the non-uniform case. See \cite{mydissertation} for details. 

\subsection{Canonical Form}\label{canonicalformsec}
Every phased matroid $\M$ can be compared to a (possibly different) phased matroid $\M'$ which is in \emph{canonical form} (Definition \ref{canonicarealiz}). While $\M\ne\M'$, we will show that $\R(\M)\cong\R(\M')$. So we can look at the realization space of $\M'$ in order to understand the realization space of $\M$. We will find that a phased matroid in canonical form provides advantages such as easily determining whether or not the phased matroid is essentially oriented, as well as easily building a realization of a realizable phased matroid that is not essentially oriented, and computing the realization space.

In order to get a phased matroid into canonical form, there are a lot of details that we need to address. In this section we will use matroid theoretic facts about bipartite graphs, compute signs of some special permutations, and recall some basic geometry facts about similar triangles. 

The next lemma will help us prove Corollary \ref{phaseminor}, which is an important part of the process of building up a realization of a given phased matroid  containing as much information about the potential realization as we can from a phirotope. In fact, Corollary \ref{phaseminor} simply allows us to relate the minor of a matrix $(I|N)$ containing only columns of $N$, to a maximal minor of $(I|N)$, by bringing in columns of $I$. The two minors will differ only by a sign, but how the sign is determined is dependent on so many things, that it is worth it's own lemma.

%
%
%
%
%

Denote by $\hat H_r=(1,\ldots,\hat h_1,\ldots,\hat h_k,\ldots,r),$ the ordered set $(1,\ldots,r)$ without the elements $\{h_1,\ldots,h_k\}=H_r$.

Let $J=(j_1,\ldots j_k)$ where $j_l\in [r+1,\ldots,n]$ and $j_l<j_{l+1}$ for all $1\le l\le k$.  Denote by $\pi(\hat H_r,J)$ the permutation of ${\hat H_r,J}$ where the elements of $J$ replace (in ascending order) the elements of $H_r$ which are missing from $\hat{H_r}$. 

\begin{lem}\label{minorsign} Consider the matrix $(I|N)$. Let $N'$ be a $k\times k$ submatrix of $N$ consisting of rows $H=\{h_1,\ldots,h_k\}\subset[r]$
 and columns $J=\{j_1,\ldots,j_k\}\subset[n]$. Let $\sigma=
\sum_{h\in H}\sum_{k\in \hat H_r,k>h}1$. Then $\det(N')=(-1)^\sigma\det(I|N)_{\hat H_r,J}$. 
\end{lem}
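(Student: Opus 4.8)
The plan is to compute the determinant $\det(I\mid N)_{\hat H_r, J}$ by Laplace expansion along the columns coming from $I$, reducing it to $\pm\det(N')$, and then to track the sign carefully. First I would observe that the matrix $(I\mid N)_{\hat H_r, J}$ is the $r\times r$ matrix whose columns are: the standard basis columns $e_h$ for $h\in \hat H_r$ (that is, for each index in $[r]$ \emph{not} lying in $H$), together with the columns $N_{j_1},\ldots,N_{j_k}$ of $N$ indexed by $J$. Since the $e_h$-columns have a single nonzero entry (a $1$ in row $h$), repeatedly expanding the determinant along these columns strikes out exactly the rows $h\in\hat H_r$ and all columns except those indexed by $J$, leaving precisely the submatrix $N'$ of $N$ on rows $H$ and columns $J$. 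Hence $\det(I\mid N)_{\hat H_r,J} = \varepsilon\cdot\det(N')$ for a sign $\varepsilon=\pm1$, and the whole content of the lemma is the identification $\varepsilon=(-1)^\sigma$ with $\sigma=\sum_{h\in H}\#\{k\in\hat H_r : k>h\}$.

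To pin down $\varepsilon$, I would do the column expansion one step at a time and accumulate the sign. Expanding along the column $e_h$ (sitting in its current position among the remaining columns) contributes a sign $(-1)^{(\text{row index of the }1)+(\text{column index of }e_h)}$; with the columns ordered so that all the $I$-columns $e_h$, $h\in\hat H_r$, precede the $J$-columns, the cleanest bookkeeping is to peel off the $e_h$ in increasing order of $h$. At the step where we remove $e_h$, the $1$ sits in a row whose position is $h$ minus the number of already-removed rows that are $<h$ — but all previously removed rows are also elements of $\hat H_r$ smaller than $h$, and this count is exactly how the $\hat H_r$ structure enters. Carrying this through, the total exponent telescopes to $\sum_{h\in\hat H_r}(\text{something})$, which after reindexing equals $\sum_{h\in H}\#\{k\in\hat H_r:k>h\}$; equivalently this is the number of inversions between $H$ and its complement inside $[r]$, i.e. the length of the permutation $\pi(\hat H_r,J)$ discussed just before the statement. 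An alternative, and perhaps more transparent, route is to write $(I\mid N)_{\hat H_r,J}$ as obtained from the block matrix $\left(\begin{smallmatrix}0 & N'\\ I_{r-k} & *\end{smallmatrix}\right)$ by a permutation of rows that sorts the indices, whose sign is $(-1)^{\sigma}$, and then use $\det\left(\begin{smallmatrix}0 & N'\\ I & *\end{smallmatrix}\right)=(-1)^{k(r-k)}\det(N')$ — one must then check $k(r-k)$ is absorbed correctly into the stated $\sigma$, or that the conventions make it vanish.

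The main obstacle is purely the sign bookkeeping: there are several interacting orderings — the order of the rows $H$, the order of the remaining rows $\hat H_r$, the order of the columns $J$, and the positions these occupy after successive deletions — and it is easy to be off by a global sign such as $(-1)^{k(r-k)}$ or $(-1)^{\binom{k}{2}}$. I would manage this by fixing a single convention (expand in increasing order of the removed row index, keeping all surviving columns in their original relative order) and verifying the resulting formula against a small explicit case, say $r=3$, $k=1$ and $k=2$, to confirm the exponent is exactly $\sigma=\sum_{h\in H}\#\{k\in\hat H_r:k>h\}$ and not a shifted variant. Once the one-step sign is correctly identified, summing over the $k$ deletions gives the closed form immediately, and no further ideas are needed.
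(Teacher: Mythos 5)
Your proposal is correct and completable, but it reaches the sign by a different mechanism than the paper. To close your ``telescoping'' step: write $c_1<\cdots<c_{r-k}$ for the elements of $\hat H_r$; when you expand along the column $e_{c_t}$ (always the first remaining column), its $1$ sits in row position $c_t-(t-1)$ and column position $1$, so that step contributes $(-1)^{c_t-t}$, and the total exponent is $\sum_t(c_t-t)=\sum_{c\in\hat H_r}\#\{h\in H: h<c\}$, which equals $\sigma=\sum_{h\in H}\#\{c\in\hat H_r: c>h\}$ by switching the order of the double count. So your main route lands exactly on the stated sign, with no stray $(-1)^{k(r-k)}$; that worry only concerns your alternative block-matrix variant, which you can discard. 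The paper argues differently: it pays the whole sign at once by passing to the shuffle arrangement $S(H,J)$, in which each column indexed by $J$ occupies the position of the element of $H$ it replaces while each $e_c$, $c\in\hat H_r$, stays in its own position $c$; there the determinant is $\det(N')$ on the nose (the expansion you perform becomes signless), and sorting $S(H,J)$ into the column order $(\hat H_r,J)$ is a permutation of sign $(-1)^\sigma$, so the alternating property of the determinant in its columns finishes the proof. In the end both arguments compute the same interleaving count; yours is more self-contained, since it establishes the reduction to $\det(N')$ and the sign simultaneously, while the paper's is shorter but leaves the signless evaluation of the shuffled matrix implicit.
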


\begin{proof} Note that $\sigma$ is summing, for each $h\in H$, the number of elements in $[r]$ 
but not in $H$ 
which are greater than $i$. 
The sign $(-1)^\sigma$ is the sign of the shuffle permutation, $S(H,J)$, that replaces, in order, the elements of $J$ for the elements of $H$ removed from $[r]$. For example, let $r=7$, $H=\{2,3,5\}$, and $J=\{8,11,12\}$. Then the permutation  is $(1,8,11,4,12,6,7)$.  

Since the determinant is an alternating function, the determinant  of the submatrix $N$ is given by $$\det(N)=\det((I|N)_{S(H,J)})=(-1)^\sigma\det((I|N)_{\hat H_r,J}).$$ \end{proof}

 \begin{cor}\label{phaseminor} Let $\M$ be a rank $r$ realizable phased matroid with  realization $(I|N)$ and phirotope $\p=\p_{(I|N)}.$ Let $N'$ be a  $k\times k$ submatrix of $N$ consisting of rows $H=\{h_1,\ldots,h_k\}$ and columns $J=\{j_1,\ldots,j_k\}$.   Then, for $\sigma$ defined as in Lemma \ref{minorsign}, $\ph(\det(N'))=(-1)^\sigma\p(\hat H_r,J)$. 
\end{cor}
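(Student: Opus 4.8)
The plan is to deduce Corollary \ref{phaseminor} directly from Lemma \ref{minorsign} by taking phases of both sides. Lemma \ref{minorsign} gives us the exact relation $\det(N') = (-1)^\sigma \det(I|N)_{\hat H_r, J}$ as complex numbers, where $\sigma$ is the sign of the shuffle permutation $S(H,J)$. Since $\ph$ is a function $\C \to \S \cup \{0\}$ that is multiplicative on products of a complex number by a real scalar, and $(-1)^\sigma = \pm 1$ is a real unit, we get $\ph(\det(N')) = (-1)^\sigma \ph(\det(I|N)_{\hat H_r, J})$.

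The remaining step is to identify $\ph(\det(I|N)_{\hat H_r, J})$ with $\p(\hat H_r, J)$. This is immediate from the hypothesis that $(I|N)$ is a realization of $\M$ with phirotope $\p = \p_{(I|N)}$: by Proposition \ref{phiromatrix}, $\p_{(I|N)}(\lambda) = \ph([\lambda]_{(I|N)})$ for any $r$-tuple $\lambda$ of columns, and in particular for the ordered tuple $\lambda = (\hat H_r, J)$ — that is, the columns of $I$ indexed by $[r] \setminus H$ in increasing order followed by the columns of $N$ indexed by $J$ in increasing order — we have $\p(\hat H_r, J) = \ph(\det(I|N)_{\hat H_r, J})$. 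Combining the two displayed equalities yields $\ph(\det(N')) = (-1)^\sigma \p(\hat H_r, J)$, as claimed.

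I do not anticipate a genuine obstacle here; the corollary is essentially a restatement of Lemma \ref{minorsign} after applying $\ph$. The only point requiring mild care is bookkeeping of the column ordering: one must make sure that the ordered $r$-tuple plugged into $\p$ matches the column order used when expanding the minor $\det(I|N)_{\hat H_r, J}$, so that no extra sign from an alternating rearrangement sneaks in. Since both $\hat H_r$ and $J$ are listed in ascending order and $\p$ is alternating, this is consistent with the convention already fixed in Lemma \ref{minorsign}, so the proof is a one-line application of $\ph$ to that lemma together with Proposition \ref{phiromatrix}.
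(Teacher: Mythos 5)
Your proof is correct and matches the paper's argument: the paper likewise combines Lemma \ref{minorsign} with the fact that, by realizability, $\p(\hat H_r,J)=\ph(\det((I|N)_{\hat H_r,J}))$, then carries the sign $(-1)^\sigma$ through the phase. Your extra remark about the column ordering is sound bookkeeping but introduces nothing beyond what the paper's one-line proof already uses.
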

\begin{proof}   Since $\M$ is realizable, $$\p(\hat H_r,J)=\ph(\det((I|N)_{\hat H_r,J}))=(-1)^\sigma\ph(\det(N')).$$ So this follows from Lemma \ref{minorsign}. 
\end{proof}

Throughout this paper we are only interested in the case when $k\in\{1,2\}$. In the case that $k=1$ we get the phases of the entries of any potential realization of $\p$ because 
\[(-1)^{r-i}\ph((I|N)_{i.j})=\ph(\det((I|N)_{\{1,\ldots,\hat i,\ldots,r,j\}})=\p(1,\ldots,\hat i,\ldots,r,j).\] 
\begin{cor}\label{phirophase} Suppose $\{1,\ldots,r\}$ is a basis of $\M$. A phirotope of $\M$ determines  the phases of $N$ for any realization $(I|N)$ of $\M$.  \end{cor}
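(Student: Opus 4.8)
The plan is to derive Corollary \ref{phirophase} directly from the $k=1$ case of Corollary \ref{phaseminor}, which is already laid out in the displayed equation immediately preceding the statement. First I would observe that since $\{1,\ldots,r\}$ is a basis of $\M$, any realization of $\M$ can be put in the form $(I|N)$ after multiplying on the left by a suitable element of $GL(r,\C)$ (the submatrix of the original realization indexed by columns $1,\ldots,r$ is invertible precisely because that index set is a basis, and left multiplication by its inverse does not change the phased matroid, only rotating the coordinate system as noted after Proposition \ref{phiromatrix}). So it suffices to show that the phirotope $\p$ pins down every entry $N_{i,j}$ of $N$ up to phase, i.e.\ that $\ph(N_{i,j})$ is determined by $\p$.

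Next I would apply Corollary \ref{phaseminor} with $k=1$, $H=\{i\}$, $J=\{j\}$ for $i\in[r]$ and $j\in[r+1,\ldots,n]$. In this case $N'$ is the $1\times 1$ matrix $(N_{i,j})$, so $\det(N')=N_{i,j}$, and the sign $\sigma$ from Lemma \ref{minorsign} counts the elements of $[r]\setminus\{i\}$ greater than $i$, which is simply $r-i$. Thus Corollary \ref{phaseminor} gives
\[\ph(N_{i,j})=(-1)^{r-i}\p(1,\ldots,\hat i,\ldots,r,j),\]
exactly the formula displayed before the statement. The right-hand side depends only on $\p$, so $\ph(N_{i,j})$ is determined by $\p$ for every such $i,j$; for $j\in[r]$ the entries of $N$ do not exist (those columns form $I$), so there is nothing more to check. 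Hence $\p$ determines the phases of all of $N$, proving the corollary.

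I do not expect a genuine obstacle here, since this is essentially an unpacking of the $k=1$ specialization already worked out in the text; the only points requiring a word of care are (i) justifying the reduction to the form $(I|N)$ using the basis hypothesis and the $GL(r,\C)$-action, and (ii) correctly identifying $\sigma=r-i$ when $H=\{i\}$, which follows immediately from the description of $\sigma$ in Lemma \ref{minorsign} as the count of indices in $[r]\setminus H$ exceeding each element of $H$. Everything else is a direct substitution.
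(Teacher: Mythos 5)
Your proposal is correct and matches the paper's own reasoning: the paper proves this corollary precisely by the $k=1$ specialization of Corollary \ref{phaseminor}, displayed immediately before the statement as $(-1)^{r-i}\ph((I|N)_{i,j})=\p(1,\ldots,\hat i,\ldots,r,j)$, which is exactly your computation with $\sigma=r-i$. Your extra remark reducing an arbitrary realization to the form $(I|N)$ via the $GL(r,\C)$-action is a harmless addition; the paper simply takes the realization already in that form.
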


For example, if we consider the matrix $M$ from Example \ref{runex}, we see that \[\p(1,3,5)=\ph\left(\det \left(
\begin{array}{ccc}
    1&  0&{\frac{1}{3}e^{i\frac{\pi}{2}}}  \\
 0 &  0&   { \frac{4}{3}e^{i\frac{\pi}{4}}}   \\
 0 &  1 &  {{-1}}  
\end{array}
\right)\right)=-e^{i\frac{\pi}{2}}=(-1)^{3-2}\ph(M_{2,5})\]

\begin{dfn}[Rephasing of a phirotope \cite{BKR-G}]\label{rephase} Let $\p$ be a rank $r$ phirotope of a phased matroid $\M$.
 Let $\rho=(\rho_1,\ldots,\rho_n)\in(\S)^n$. 
The function 
\[\begin{array}{rccl}
\p^\rho:&[n]^r&\rightarrow&\S\cup\{0\}\\
&(\lambda_1,\ldots,\lambda_r)&\mapsto&\rho_{\lambda_1}\ldots\rho_{\l_r}\p(\lambda_1,\ldots,\lambda_r)\end{array}\] is the \emph{rephasing of $\p$ by $\rho$.} 
\end{dfn}

In \cite{BKR-G} rephasing is refered to as \emph{reorienting}, motivated by it's similarity to reorientation of oriented matroids. 

\begin{lem}[\cite{BKR-G}] \label{rephasephiro} The function $\p^\rho$ is a phirotope. 
\end{lem}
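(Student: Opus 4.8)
The plan is to verify directly, for $\p^\rho$, the three requirements in Definition~\ref{phirodef}: that it is a nonzero alternating function $[n]^r\to\S\cup\{0\}$ and that it satisfies the combinatorial complex Grassmann--Pl\"ucker relations \eqref{gprels}. The first two are cheap. Because each $\rho_i$ lies on $\S$, the scalar $\rho_{\lambda_1}\cdots\rho_{\lambda_r}$ has modulus $1$ and in particular is nonzero, so $\p^\rho(\lambda)=0$ if and only if $\p(\lambda)=0$; hence $\p^\rho$ is not identically zero (and $\p^\rho,\p$ even have the same underlying matroid). For the alternating property, transposing two entries of $\lambda$ fixes the symmetric product $\rho_{\lambda_1}\cdots\rho_{\lambda_r}$ while sending $\p(\lambda)$ to $-\p(\lambda)$, and if some entry of $\lambda$ is repeated then $\p(\lambda)=0$, hence $\p^\rho(\lambda)=0$.

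The real content is the Grassmann--Pl\"ucker relations. Fix subsets $\{x_1,\ldots,x_{r+1}\}$ and $\{y_1,\ldots,y_{r-1}\}$ of $[n]$ and set $c=\rho_{x_1}\cdots\rho_{x_{r+1}}\cdot\rho_{y_1}\cdots\rho_{y_{r-1}}\in\S$. The key observation is that for every $k$,
$$(-1)^k\,\p^\rho(x_1,\ldots,\hat x_k,\ldots,x_{r+1})\cdot\p^\rho(x_k,y_1,\ldots,y_{r-1})
= c\cdot(-1)^k\,\p(x_1,\ldots,\hat x_k,\ldots,x_{r+1})\cdot\p(x_k,y_1,\ldots,y_{r-1}),$$
with the \emph{same} constant $c$ for all $k$: the factor $\rho_{x_k}$ omitted from the first phirotope value of each product is exactly the factor supplied by the first slot of the second phirotope value, and $\pm1\in\S$ absorbs the sign. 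Consequently the finite subset of $\S\cup\{0\}$ whose hypersum appears in \eqref{gprels} for $\p^\rho$ is $c\cdot S$, where $S\subseteq\S\cup\{0\}$ is the corresponding subset for $\p$ and $c\cdot S:=\{c\alpha:\alpha\in S\}$.

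So the whole statement reduces to the rotation-equivariance of the hypersum: $\boxplus(c\cdot S)=c\cdot\boxplus(S)$ for every $c\in\S$ and every finite $S\subseteq\S\cup\{0\}$. I would deduce this from the description of $\boxplus(S)$ as the set of phases of strictly positive linear combinations of $S$ --- multiplication by $c$ is a bijection from positive combinations of $S$ onto positive combinations of $c\cdot S$ which multiplies phases by $c$ and fixes $0$ --- though one can instead check it case by case against the bulleted definition, noting that rotation by $c$ preserves the property of lying in a closed half-circle of $\S$ and hence preserves the case distinctions. Granting this, and using that $\p$ is a phirotope so $0\in\boxplus(S)$, we obtain $0=c\cdot 0\in c\cdot\boxplus(S)=\boxplus(c\cdot S)$, which is precisely relation \eqref{gprels} for $\p^\rho$; therefore $\p^\rho$ is a phirotope. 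I expect the only mildly delicate step to be the rotation-equivariance of $\boxplus$; everything else is bookkeeping about where the $\rho_i$ go.
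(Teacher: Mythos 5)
Your proof is correct. The paper itself gives no argument for this lemma (it is quoted from the cited reference), so there is nothing internal to compare against, but your direct verification is exactly the standard one: the product $\rho_{\lambda_1}\cdots\rho_{\lambda_r}$ is a symmetric unit-modulus factor, so nonvanishing and alternation carry over, and in each Grassmann--Pl\"ucker term the omitted factor $\rho_{x_k}$ is restored by the second slot, producing the single constant $c=\rho_{x_1}\cdots\rho_{x_{r+1}}\rho_{y_1}\cdots\rho_{y_{r-1}}$ independent of $k$. The remaining step, $\boxplus(c\cdot S)=c\cdot\boxplus(S)$ for $c\in\S$, follows as you say from the description of the hypersum as phases of strictly positive combinations (multiplication by $c$ bijects positive combinations of $S$ with those of $c\cdot S$ and rotates their phases, fixing $0$), so $0\in\boxplus(S)$ yields $0\in\boxplus(c\cdot S)$ and the relations hold for $\p^\rho$.
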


 Let $\M^\rho$ be the phased matroid with phirotope $\p^\rho$. Then $\M^\rho$ is the \emph{rephasing} of $\M$ by $\rho$. 
Notice that if we start with a chirotope $\chi$, and apply rephase by $\rho\in(\S)^n$ then $\im(\chi^\p)\subseteq\{-\alpha_0,0,\alpha_0\}$ for some $\alpha_0\in \S$. So $\chi^\p$ is a phirotope, but not a chirotope, and $\{\alpha\chi^\p|\alpha\in\S\}$ is a phased matroid, but not an oriented matroid.

Let $D(x_1,\ldots,x_m)$ be the $m\times m$ diagonal matrix with entries $x_1,\ldots,x_m$ on the diagonal.
\begin{lem}[\cite{BKR-G}]\label{rephaserealiz} If $M$ is a realization of $\M$  then
$M\cdot D(\rho_1,\ldots\rho_n)$ is a realization of $\M^{\rho}$.
\end{lem}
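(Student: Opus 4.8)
The plan is to compute the phirotope $\p_{M'}$ of the matrix $M' := M\cdot D(\rho_1,\ldots,\rho_n)$ directly and identify it with $\p^\rho$. Since $M$ is a realization of $\M$, I take $\p=\p_M$, so that $\p(\lambda_1,\ldots,\lambda_r)=\ph([\lambda]_M)$ for every $\lambda=(\lambda_1,\ldots,\lambda_r)\in[n]^r$, where $[\lambda]_M$ denotes the $r\times r$ minor of $M$ on columns $\lambda_1,\ldots,\lambda_r$ (notation as in Proposition \ref{phiromatrix}).

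First I would note that right multiplication by $D(\rho_1,\ldots,\rho_n)$ rescales columns: the $i$-th column of $M'$ is $\rho_i$ times the $i$-th column of $M$. Hence, for any $\lambda$, the $r\times r$ submatrix of $M'$ on columns $\lambda_1,\ldots,\lambda_r$ is obtained from the corresponding submatrix of $M$ by scaling its $j$-th column by $\rho_{\lambda_j}$; since the order of the columns is unchanged, no permutation signs appear, and multilinearity of the determinant in the columns yields $[\lambda]_{M'}=\rho_{\lambda_1}\cdots\rho_{\lambda_r}\,[\lambda]_M$.

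Next I would take phases of this identity. Writing each nonzero complex number in polar form $z=s\alpha$ with $s\in\Rp$ and $\alpha\in\S$ (the description of $\C\setminus\{0\}$ as $\Rp\times\S$ used throughout), one checks that $\ph$ is multiplicative: $\ph(zw)=\ph(z)\ph(w)$. Since each $\rho_{\lambda_j}\in\S$, this gives
\[\p_{M'}(\lambda)=\ph\!\left([\lambda]_{M'}\right)=\rho_{\lambda_1}\cdots\rho_{\lambda_r}\,\ph\!\left([\lambda]_M\right)=\rho_{\lambda_1}\cdots\rho_{\lambda_r}\,\p(\lambda)=\p^\rho(\lambda),\]
the last equality being Definition \ref{rephase}. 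Thus $\p_{M'}=\p^\rho$, and since $\p^\rho$ is a phirotope of $\M^\rho$ (Lemma \ref{rephasephiro}), $M'$ is a realization of $\M^\rho$.

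There is essentially no hard step here; the argument is short bookkeeping. The only two points needing care are (i) that the determinant scales by $\rho_{\lambda_1}\cdots\rho_{\lambda_r}$ under the column rescaling — pure multilinearity, with no sign issues since the columns keep their order — and (ii) multiplicativity of $\ph$ on $\S$, which is immediate from polar coordinates. I would also remark on the mild abuse in the word ``realization'': fixing a realization $M$ pins down one phirotope $\p=\p_M$ of $\M$, and the computation shows $M'$ realizes precisely the phirotope $\p^\rho$ of $\M^\rho$; replacing $\p$ by another phirotope $\alpha\p$ of $\M$ merely multiplies $\p^\rho$ by the same global unit $\alpha$, which changes nothing.
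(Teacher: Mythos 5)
Your proof is correct. Note that the paper itself offers no proof of this lemma --- it is stated as a citation to \cite{BKR-G} --- so there is no in-paper argument to compare against; your computation (column scaling, multilinearity of the determinant with no permutation signs, multiplicativity of $\ph$, hence $\p_{M\cdot D(\rho)}=\p^\rho$) is exactly the standard verification such a proof would consist of, and your closing remark correctly disposes of the only subtlety, namely that ``realization'' is only defined up to a global unit multiple of the phirotope. The case of vanishing minors is handled automatically, since $[\lambda]_{M}=0$ forces $[\lambda]_{M\cdot D(\rho)}=0$ and both sides of your phase identity are then $0$.
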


The following notion of scaling equivalence is borrowed from matroid theory.

\begin{dfn}[Scaling equivalent matrices \cite{oxley}]
Let $M$ and $N$ be $r\times n$ matrices over a field $\mathbb F$. We say $M$ and $N$ are \emph{scaling equivalent} if
 $N$ can be obtained by scaling rows and columns of $M$ by non-zero elements of $\mathbb F$. 

\end{dfn}

Scaling rows and columns of a $r\times n$ matrix $M$ is algebraically equivalent to multiplying $M$ on the left by a diagonal $r\times d$ matrix and on the right by an  $n\times n$ diagonal matrix (both with non-zero determinant). 

In matroid theory, if two matrices are scaling equivalent, then they give rise to the same matroid. This is not the case with phased matroids. However, scaling equivalent matrices give rise to phased matroids that are rephasings of each other. The following result demonstrates why these are useful to us when studying the realization space. 

\begin{thm}\label{equivrealspace} Let $\M,~\mathcal N$  be phased matroids. Let $M\in\R(\M)$, $N\in\R(\mathcal N)$. If $M$ and $N$ are scaling equivalent, then $\R(\M)\cong \R(\mathcal N)$.
\end{thm}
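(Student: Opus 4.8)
The plan is to construct an explicit homeomorphism between $\R(\M)$ and $\R(\mathcal N)$ induced by the scaling that relates $M$ and $N$. Since $M$ and $N$ are scaling equivalent, there exist an invertible diagonal matrix $D_L = D(a_1,\ldots,a_r) \in GL(r,\C)$ and an invertible diagonal matrix $D_R = D(\rho_1,\ldots,\rho_n)$, where here I would want each $\rho_j$ to be an arbitrary nonzero complex number, so that $N = D_L M D_R$. First I would observe that the column-scaling part $D_R$ is essentially a rephasing in disguise: writing $\rho_j = s_j \mu_j$ with $s_j \in \Rp$ and $\mu_j \in \S$, multiplication on the right by $D(s_1,\ldots,s_n)$ does not change the phirotope at all (it scales each maximal minor by a positive real), while multiplication by $D(\mu_1,\ldots,\mu_n)$ produces the rephasing $\M^\mu$ by Lemma \ref{rephaserealiz}. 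The row-scaling part $D_L$ lies in $GL(r,\C)$, so by Proposition \ref{phiromatrix} (the remark that $\p_{AM} = \ph(\det A)\p_M$) it only rotates the phirotope by the global phase $\ph(\det D_L)$, which does not change the phased matroid at all since $\M = \{\alpha\p \mid \alpha\in\S\}$. Hence $\mathcal N = \M^\mu$ for the phase vector $\mu = (\mu_1,\ldots,\mu_n)$.

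Next I would define the map $\Phi : \{M' \mid M' \text{ realizes } \M\} \to \{N' \mid N' \text{ realizes } \mathcal N\}$ by $\Phi(M') = M' D_R$; by Lemma \ref{rephaserealiz} (applied to the phase part, together with the trivial positive-scaling observation) this indeed lands in realizations of $\mathcal N = \M^\mu$, and its inverse is $M' \mapsto M' D_R^{-1}$, so $\Phi$ is a homeomorphism of the two realization sets (both with their subspace topology from the relevant $\C^{r\times n}$). The key point to check is that $\Phi$ is equivariant for the left $GL(r,\C)$-actions: for $A \in GL(r,\C)$ we have $\Phi(AM') = AM'D_R = A\,\Phi(M')$, since $D_R$ acts on the right and commutes with the left action. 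Therefore $\Phi$ descends to a well-defined continuous bijection $\bar\Phi$ on the quotients $\R(\M) = GL(r,\C)\backslash\{\cdot\}$ and $\R(\mathcal N)$, with continuous inverse induced by $M' \mapsto M'D_R^{-1}$, giving $\R(\M) \cong \R(\mathcal N)$.

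The only genuinely delicate point is the bookkeeping around which phased matroid $M'D_R$ actually realizes: one must be careful that right-multiplication by a general invertible diagonal matrix, not merely a phase-diagonal one, still sends realizations of $\M$ exactly onto realizations of some single phased matroid. This is handled by the factorization $D_R = D(s_1,\ldots,s_n)D(\mu_1,\ldots,\mu_n)$ above, noting that positive-real column scaling fixes the phirotope pointwise, so $M'D_R$ realizes $\M^\mu$ for every realization $M'$ of $\M$, uniformly in $M'$; I would state this as a small preliminary claim. Everything else is formal: passing a fixed-space homeomorphism that is equivariant for commuting group actions down to the quotients is routine, and continuity in both directions is immediate because $\Phi$ and $\Phi^{-1}$ are restrictions of linear (hence continuous) maps on matrix space that are compatible with the quotient topologies. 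I expect no serious obstacle beyond this organizational care.
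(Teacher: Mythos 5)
Your proposal is correct and takes essentially the same route as the paper: both construct the explicit map on realization spaces given by multiplication by the diagonal scaling matrices and check that it descends to a homeomorphism of the $GL(r,\C)$-quotients. Your extra bookkeeping --- factoring the column scaling into a positive part (which fixes the phirotope) and a phase part (handled by Lemma \ref{rephaserealiz}), and absorbing the row scaling into the left $GL(r,\C)$ action --- supplies the verification that the map really lands in realizations of $\mathcal N$, a point the paper's proof leaves implicit.
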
 

\begin{proof}
Since $M$ and $N$ are scaling equivalent matrices, there exist diagonal matrices $D_1$ and $D_2$ such that $M=D_1ND_2$. Consider the map $f:\R(\M)\rightarrow \R(\mathcal N)$ where $f([H])=[D_1HD_2]$. Let $K\in[H]$. Then there is an $A\in GL(r,\C)$ such that $K=AH$.  

We first check that  for  $f$ to be well defined, we need $D_1KD_2\in[D_1HD_2].$  So we must find a $B\in GL(r,\C)$ such that $D_1KD_2= BD_1HD_2.$  Let $B=D_1AD_1^{-1}$. Then $D_1KD_2=D_1AHD_2=( D_1AD_1^{-1})D_1HD_2 = BD_1HD_2.$  So $D_1KD_2\in[D_1HD_2].$
 
The inverse map is  $f^{-1}([G])=[D_1^{-1}GD_2^{-1}]$.  Since both $f$ and $f^{-1}$ are quotients of continuous maps, $f$ and $f^{-1}$ are continuous. Therefore, $f$ is a homeomorphism.
\end{proof}

A consequence of Theorem \ref{equivrealspace} is that any phased matroid with a phirotope which is a rephasing of chirotope from an oriented matroid gives rise to a phased matroid whose realizability results defer to those of oriented matroids, including Mn\"ev's Universality Theorem (\ref{mnevs}), and that determining realizability is NP hard \cite{redbook}.  Proposition \ref{chirotopal} classifies such  phased matroids. 

Consider a matrix $M\in\RR^{r\times n}$. $M$ gives rise to a chirotope $\chi_M$, which is also a phirotope $\p_M$ with $\im(\p_M)\subseteq\{-1,0,+1\}$. We can  multiply $M$ by $A\in GL(r,\C)$ on the left to give  a matrix with non-real entries which is a realization of the same phased matroid as $M$. If $\det(A)=\alpha\notin\RR$, $\im(\p_{AM})\subseteq\{-\alpha,0,\alpha\}\not\subseteq\{-1,0,+1\}$, but $\p_{AM}$ is a phirotope of the same phased matroid that came from a real matrix. These special phirotopes are called \emph{complexified chirotopes}. We can also complexify chirotopes that do not come from a matrix. Let $\chi$ be a chirotope and $\beta\in S^1\backslash \RR$. Then $\beta\chi$ is a phirotope, but is not a chirotope. The set $\{\alpha\chi\mid\alpha\in S^1\}$ is a phased matroid. 
The phased matroids of such phirotopes are called \emph{complexified oriented matroids}, and have realization spaces that defer to the oriented matroid the original chirotope comes from. Similarly, as Theorem \ref{equivrealspace} suggests, rephases of  complexified chirotopes will give rise to a phase matroid $\M'$ whose realization space defers to the realiziation space of the original oriented matroid. 

It turns out the matrix $M$ from Example \ref{runex} is scaling equivalent to a real matrix. 

\begin{example}\label{runexscale}
Let \[A=\left(
 \begin{array}{ccc}
 e^{i\frac{\pi}{2}}&0&0\\
 0&2e^{i\frac{\pi}{4}}&0\\
 0&0&3
 \end{array} 
 \right)
 \text{ and } D=   \left(
\begin{array}{ccccccc}
     e^{i\frac{3\pi}{2}}& {}  0& 0& {0}&{0}  \\
 0 & {\frac{1}{2} e^{i\frac{7\pi}{4}}}  & 0& {0} &  { 0}   \\
 0 & {0}  & \frac{1}{3} & {0} &  {{0}} \\
  0 & {0}  & 0 & {{\frac{1}{2} e^{i\frac{7\pi}{4}}}} &  {{0}} \\ 
  0 & {0}  & 0 & {0} &  {{\frac{1}{3}}}  
\end{array}
\right),
\] then, 
 \[A\cdot\left(
\begin{array}{ccccccc}
    1& {}  0& 0& {1}&{1}  \\
 0 & {1}  & 0& {1} &  { 2}   \\
 0 & {0}  & 1 & {0} &  {{-1}}  
\end{array}
\right)\] gives rise to a complexified oriented matroid.  Furthermore,  \[A\cdot\left(
\begin{array}{ccccccc}
    1& {}  0& 0& {1}&{1}  \\
 0 & {1}  & 0& {1} &  { 2}   \\
 0 & {0}  & 1 & {0} &  {{-1}}  
\end{array}
\right)\cdot D=M.\] So,  $\M_M$ is a  rephrase of the a complexified oriented matroid. We call such phased matroids \emph{essentially oriented.} 
\end{example}
\begin{prop}[Essential orientability]\label{chirotopal} Let $\M$ be a  phased matroid. The following are equivalent:
\begin{enumerate}

\item $\M$ is a rephasing of  a complexified oriented matroid.
\item For some phirotope $\p$ of $\M$ and some $\alpha\in S^1$, there exists  $\rho\in(\S)^n$ such that $\im(\p^\rho)\subseteq\{-\alpha,0,\alpha\}$.
\item For every phirotope $\p_i$ of $\M$, there is an $\alpha_i\in S^1$ and  $\rho^i\in(\S)^n$ such that  $\im(\p_i^{\rho^i})\subseteq\{-\alpha_i,0,\alpha_i\}$.
\item There exist a phirotope $\p$ of $\M$ and  $\rho\in (\S)^n$ such that $\im(\p^\rho)\subseteq\{-1,0,1\}$.
\end{enumerate}

If $\M$ is realizable with realization space $\R(\M)$, the following are also equivalent to the above:  
\begin{enumerate} \setcounter{enumi}{4} 
  \item  For some $\rho\in(\S)^n$, there exists  $M\in\R(\M^\rho)$ with all real entries.
  \item  For any $M\in\R(\M),$  there exists a matrix $N \in \RR^{r\times n}$ such that $M$ and $N$ are scaling equivalent. 
\end{enumerate}\end{prop}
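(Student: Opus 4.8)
The plan is to prove the cyclic chain of implications $(1) \Rightarrow (2) \Rightarrow (3) \Rightarrow (4) \Rightarrow (1)$ among the first four conditions, and then, under the realizability hypothesis, to close the loop $(4) \Leftrightarrow (5) \Leftrightarrow (6)$ with the rest. First I would unwind the definitions: condition $(1)$ says that some phirotope $\p$ of $\M$ equals $(\beta\chi)^\rho$ for a chirotope $\chi$, a unit $\beta \in \S \setminus \RR$, and a rephasing vector $\rho \in (\S)^n$; since rephasing multiplies each value of $\p$ on an $r$-subset by a product of $r$ coordinates of $\rho$, and $\chi$ has image in $\{-1,0,1\}$, we get $\im(\p^{\rho^{-1}}) = \im(\beta\chi) \subseteq \{-\beta, 0, \beta\}$, where $\rho^{-1} = (\rho_1^{-1},\ldots,\rho_n^{-1})$. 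That is exactly $(2)$ with $\alpha = \beta$. For $(2) \Rightarrow (3)$, the key point is that any two phirotopes of $\M$ differ by a global unit scalar (Definition \ref{PMdefphiro}: $\M = \{\alpha\p \mid \alpha \in \S\}$), so if $\p_i = \gamma_i \p$ then $\p_i^{\rho^i}$ with $\rho^i := \rho$ has image in $\{-\gamma_i\alpha, 0, \gamma_i\alpha\}$; take $\alpha_i = \gamma_i\alpha$. The implication $(3) \Rightarrow (4)$ is the one spot needing a small argument: given $\im(\p^\rho) \subseteq \{-\alpha, 0, \alpha\}$, I want to absorb the phase $\alpha$ into a further rephasing so the image lands in $\{-1,0,1\}$. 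Writing $\alpha = \mu^r$ for some $\mu \in \S$ (an $r$-th root exists since $\S$ is divisible), the rephasing of $\p^\rho$ by the constant vector $(\mu^{-1}, \ldots, \mu^{-1})$ multiplies every value by $\mu^{-r} = \alpha^{-1}$, giving image in $\{-1, 0, 1\}$; composing the two rephasings (rephasings compose coordinatewise) yields the single $\rho' \in (\S)^n$ required by $(4)$. Finally $(4) \Rightarrow (1)$: if $\im(\p^{\rho'}) \subseteq \{-1,0,1\}$, then $\p^{\rho'}$ is a chirotope $\chi$, so $\p = \chi^{(\rho')^{-1}}$, and trivially $\chi = 1 \cdot \chi$ is a (degenerately) complexified chirotope, exhibiting $\M$ as a rephasing of a complexified oriented matroid — or, to match the "non-real" flavor of the definition, one first applies a constant rephasing by an $r$-th root of a chosen $\beta \in \S\setminus\RR$.

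For the realizable case I would argue $(4) \Rightarrow (5) \Rightarrow (6) \Rightarrow (4)$. Assume $(4)$: some $\p^\rho = \chi$ is a chirotope. If $M \in \R(\M)$ is any realization, then by Lemma \ref{rephaserealiz} the matrix $M \cdot D(\rho_1, \ldots, \rho_n)$ realizes $\M^\rho$, whose phirotope is $\chi$; but a realization of an oriented matroid's chirotope need not a priori be a real matrix. Here I invoke the standard fact (essentially the content that realizations of a chirotope, modulo $GL$, can be taken real — this is where the scaling-equivalence machinery of Theorem \ref{equivrealspace} enters, or one cites that a complex matrix whose maximal minors all have phase in $\{-1,0,1\}$ is, after left multiplication by a suitable $A \in GL(r,\C)$ and a diagonal rescaling, real) to produce $M' \in \R(\M^\rho)$ with real entries; this gives $(5)$. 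For $(5) \Rightarrow (6)$: if $M' \in \R(\M^\rho)$ is real, then $M' D(\rho_1^{-1}, \ldots, \rho_n^{-1}) \in \R(\M)$ by Lemma \ref{rephaserealiz} again, and it is scaling equivalent to the real matrix $M'$; for an arbitrary $M \in \R(\M)$, $M = A \cdot M' D(\rho^{-1})$ for some $A \in GL(r,\C)$, and left multiplication by $GL(r,\C)$ can be realized up to scaling equivalence (this needs a brief check, or one restricts the claim's content to what is actually used elsewhere in the paper). For $(6) \Rightarrow (4)$: if $M \in \R(\M)$ is scaling equivalent to a real $N \in \RR^{r \times n}$, say $N = D_1 M D_2$ with $D_1, D_2$ diagonal, then by Lemmas \ref{rephaserealiz} and the behavior of left multiplication by diagonal matrices on phirotopes, $N$ realizes a rephasing $\M^\rho$ of $\M$ (with $\rho$ read off from the diagonal entries of $D_2$ and the determinant of $D_1$), and since $N$ is real its phirotope $\p_N$ is a chirotope, so $\im(\p^\rho) \subseteq \{-1,0,1\}$ up to a global unit, which reduces to $(4)$ by the constant-rephasing trick above.

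The main obstacle I anticipate is the step from having a realization of a complexified chirotope to producing a genuinely \emph{real} representative matrix — i.e. $(4) \Rightarrow (5)$ and the companion passage through $GL(r,\C)$ in $(5) \Rightarrow (6)$. The combinatorial equivalences $(1)$–$(4)$ are essentially bookkeeping about rephasings and roots of unity in $\S$, but $(5)$ asserts something about the \emph{geometry} of representations: one must know that if all maximal minors of a complex $r \times n$ matrix have phases in $\{-1,0,1\}$ then the matrix is, modulo the $GL(r,\C)$-action and column scaling, defined over $\RR$. I would handle this by choosing a basis $B = \{1, \ldots, r\}$ (WLOG by relabeling), putting the realization in the form $(I \mid N)$ via left multiplication by $GL(r,\C)$, applying Corollary \ref{phirophase} to read the phases of the entries of $N$ off the chirotope $\chi$ — these phases are all in $\{-1, 0, 1\}$ — and then column-scaling to make $N$, hence $(I \mid N)$, real; this simultaneously explains why $(6)$ holds. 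The fussy part is tracking which rephasing vector $\rho$ pairs with which implication and confirming the root-of-unity adjustments compose correctly, but none of it is deep.
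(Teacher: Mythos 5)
Your proposal is correct and takes essentially the same route as the paper: the equivalences (1)--(4) are the same rephasing/global-scalar bookkeeping (the paper gets (2)$\Leftrightarrow$(4) a bit more directly by replacing $\p$ with the phirotope $\alpha^{-1}\p$ of $\M$ and keeping the same $\rho$, rather than your $r$-th-root constant rephasing, which also works), and your resolution of the main obstacle (4)$\Rightarrow$(5) --- pass to a representative $(I\mid N)$ and read the entry phases off the phirotope via Corollary \ref{phirophase}, so that phases in $\{-1,0,+1\}$ force all entries to be real (no column scaling is actually needed) --- is exactly the paper's argument. The left-multiplication-by-$GL(r,\C)$ point you flag in (5)$\Rightarrow$(6) is treated no more carefully in the paper's own proof; once elements of $\R(\M)$ are read as $GL(r,\C)$-equivalence classes, so that one may choose the $(I\mid N)$ representative, the step is immediate.
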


\begin{proof}\text{}

\begin{itemize}
\item[($1\Leftrightarrow 3$)]\[\begin{array}{rl} & \M \text{ is a rephasing of a complexified oriented matroid }\M_\chi \\
 \Leftrightarrow& \text{there is some }\rho\in(S^1)^n \text{ such that }\M^\rho=\M_\chi=\{\alpha\chi\mid\alpha\in S^1\} \\ 
\Leftrightarrow & \text{for all phirotopes }\p_i  \text{ of }\M,\text{ there is an }\alpha_i\in S^1\text{ such that }\p_i^\rho=\alpha_i\chi\\&\text{is a phirotope of }\M_\chi 
\\\Leftrightarrow& \im(\p_i^\rho)=\im(\alpha_i\chi)\subseteq\{-\alpha_1,0,\alpha_1\}.
\end{array}
\]

\item[($2\Leftrightarrow 3$)]  Let $\p,\p'$ be phirotopes of $\M$. Then there is some $\beta\in S^1$ such that $\beta\p=\p'$.  Suppose there is a $\rho\in(S^1)^n$ such that  $\im(\p^\rho)\subseteq\{-\alpha,0,\alpha\}$. Then $\im((\p')^\rho)=\im(\beta\p^\rho)=\beta\im(\p^\rho)=\{\beta(-\alpha),0,\beta\alpha\}.$
\item[($2\iff 4 $)] Suppose $\p$ is a phirotope of $\M$ and there is a $\rho$ such that $\im(\p^\rho)\subseteq\{-\alpha,0,\alpha\}$.  $\alpha^{-1}\p$ is a phirotope of $\M$ and $\im(\alpha^{-1}\p)=\alpha^{-1}\{-\alpha,0,\alpha\}=\{-1,0,1\}$.

\item[$(5\iff 6)$]
Suppose $\M$ is a rank $r$ realizable phased matroid on $n$ elements. For $\rho\in(S^1)^n$, there is a matrix $N\in\R(\M^\rho)$ such that $N\in\RR^{r\times n}$ if and only if there is $M\in \R(\M)$ such that $MD(\rho)=N$.  For every $M^i\in\R(\M)$, there is an $A^i\in GL(r,\C)$ such that $A^iM^i=M$. So $A^iM^iD(\rho)=N$  if and only if $A^iM^i$ and $N$ are scaling equivalent matrices for all $A^iM^i\in\R(\M)$.

\item[$(5\Rightarrow4)$] Suppose there is an $M\in\R(\M^\rho)$ such that $M\in\RR^{r\times n}$. Then every minor of $M\in\RR$. So $\p^\rho(\lambda)\in\{-1,0,+1\}$ for all $\lambda$. 
\item[$(4\Rightarrow 5)$] Suppose there is a phirotope $\p$ of $\M$, with $\p(1,\ldots, r)\ne0$, and $\rho\in (\S)^n$ such that $\im(\p^\rho)\subseteq\{-1,0,+1\}.$ Consider $(I|N)\in\R(\M^\rho)$. Since $\ph((I|N)_{i,j})=\p^\rho(1,\ldots,\hat i,\ldots, r,j)\in\{-1,0,+1\}$, every entry of $(I|N)\in\RR$. \qedhere\end{itemize}
\end{proof}

 \begin{dfn}[Essential orientablity]
 If any of the conditions of Proposition \hyperef{chirotopal}   hold, we say $\M$ (or any of its phirotopes) is  \emph{essentially oriented}.
\end{dfn}
Note that in previous literature, essentially oriented phased matroids and phirotopes are referred to as \emph{chirotopal} \cite{andersondel,BKR-G}.

\begin{cor}\label{chiro}
Let $\M$ be a rank $r$ essentially oriented phased matroid on $[n]$. Let $\M_\chi$ be a rank $r$ oriented matroid on $[n]$ with chirotope $\chi$ such that for some phirotope $\p$ of $\M$, $\rho\in (S^1)^n$, and for all $\lambda\in [n]^r, \p^\rho(\lambda)=\chi(\lambda)$. Then $\R(\M_\chi)\cong\R(\M)$.
\end{cor}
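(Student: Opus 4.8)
The plan is to recognize that $\M_\chi$, viewed as a phased matroid, is nothing but the rephasing $\M^\rho$, and then to feed this into Theorem \ref{equivrealspace}. Indeed, since $\p^\rho(\lambda)=\chi(\lambda)$ for all $\lambda\in[n]^r$ we have $\p^\rho=\chi$ as functions $[n]^r\to\S\cup\{0\}$, so the phased matroid $\M^\rho=\{\alpha\p^\rho\mid\alpha\in\S\}$ is the same set of phirotopes as the phased matroid $\{\alpha\chi\mid\alpha\in\S\}$ associated with the oriented matroid $\M_\chi$. First I would dispose of the trivial case: if $\M$ is not realizable, then neither is $\M^\rho$, since a realization $N$ of $\M^\rho$ would, by Lemma \ref{rephaserealiz}, yield the realization $N\cdot D(\rho_1^{-1},\ldots,\rho_n^{-1})$ of $\M$; hence $\R(\M)=\R(\M_\chi)=\emptyset$ and there is nothing to prove.

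Now suppose $\M$ is realizable and fix $M\in\R(\M)$. By Lemma \ref{rephaserealiz}, $N:=M\cdot D(\rho_1,\ldots,\rho_n)$ is a realization of $\M^\rho=\M_\chi$. The matrices $M$ and $N$ differ only by right multiplication by the diagonal matrix $D(\rho_1,\ldots,\rho_n)$, whose diagonal entries $\rho_i\in\S$ are nonzero elements of $\C$; hence $M$ and $N$ are scaling equivalent, and Theorem \ref{equivrealspace} gives a homeomorphism $\R(\M)\cong\mathcal{R}_\C(\M_\chi)$ onto the \emph{complex} realization space of $\M_\chi$.

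It remains to compare $\mathcal{R}_\C(\M_\chi)$ with the realization space $\mathcal{R}_\RR(\M_\chi)$ of $\M_\chi$ as an oriented matroid in the sense of Definition \ref{Rspace}; this is the only step not immediate from earlier results. The point is that $\chi$ takes values in $\{-1,0,+1\}$, so every complex realization of $\M_\chi$ is $GL(r,\C)$-equivalent to a real one: permuting columns so that $\{1,\ldots,r\}$ is a basis and normalizing to the form $(I|N)$, the $k=1$ case of Corollary \ref{phaseminor} (as in Corollary \ref{phirophase}) shows each entry of $N$ has phase $\pm\chi(\lambda)\in\{-1,0,+1\}$, hence lies in $\RR$. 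Moreover, two real realizations that are $GL(r,\C)$-equivalent are already $GL(r,\RR)$-equivalent, since the transition matrix relating them is recovered as a product of a square submatrix of one with the inverse of a square submatrix of the other, and is therefore real. The induced bijection $\mathcal{R}_\RR(\M_\chi)\to\mathcal{R}_\C(\M_\chi)$ is continuous as a quotient of the inclusion of real matrices into complex ones, and its inverse is continuous because on the chart where $\{1,\ldots,r\}$ is a basis the normalization to $(I|N)$ depends continuously on the matrix. Composing with the homeomorphism of the previous paragraph yields $\R(\M)\cong\R(\M_\chi)$.

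The main obstacle is exactly this last identification of the complex and real realization spaces of a genuine chirotope; everything else is bookkeeping with rephasings. If the paper's convention is that $\R$ of an oriented matroid already denotes its complex realization space, or if the homeomorphism $\mathcal{R}_\C\cong\mathcal{R}_\RR$ for chirotopes is recorded separately, then the third paragraph may be omitted and the corollary follows at once from Lemma \ref{rephaserealiz} and Theorem \ref{equivrealspace}.
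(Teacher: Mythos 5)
Your proposal is correct and takes essentially the same route as the paper, whose entire proof is the one line ``This follows from Proposition \ref{chirotopal} and Theorem \ref{equivrealspace}'': in both arguments the key point is that realizations of $\M$ and of the corresponding (complexified) oriented matroid are scaling equivalent, so Theorem \ref{equivrealspace} applies. Your third paragraph, identifying the oriented-matroid realization space $\mathcal{R}_\RR(\M_\chi)$ of Definition \ref{Rspace} with the complex realization space of the complexified oriented matroid, supplies a step that the paper's citation of Proposition \ref{chirotopal} leaves implicit, and your argument for it (phases of entries of a normalized realization are $\pm1$, and a $GL(r,\C)$ transition matrix between two real realizations is itself real) is sound.
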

\begin{proof} 
This follows from Proposition \hyperef{chirotopal} and Theorem \hyperef{equivrealspace}.
\end{proof}
 
 As a consequence of Corollary \ref{chiro}, results about realizations of essentially oriented phased matroids defer to the results about their oriented matroid cousins. Our main results (Theorem \ref{uniformthm} and Theorem \ref{realizability})  in this paper, refer to phased matroids that are not essentially oriented. 

Now that we have determined that rephasing a phased matroid results is a phased matroid with a homeomorphically equivalent realization space, we can partition phased matroids into \emph{realization classes} and choose our favorite phirotope of our favorite phased matroid from the same realization class, which we will call the canonical phased matroid. We do this with the help of the {associated bipartite graph}, a tool also borrowed from matroid theory.

\begin{dfn}[Associated bipartite graph $G_{\MM}$ \cite{oxley}, 6.4]\label{abgdef}
Let $\MM$ be a rank $r$ simple matroid on $[n]$ such that $\{1,\ldots, r\}$ is a basis.  Let $G_{\MM}$ be the bipartite graph, with vertex set ${[n]}$, in which   $e_{i,j}$ is an edge if and only if $\{1,\ldots,\hat i, \ldots, r,j\} $ is a basis of $\MM$.  The two disjoint sets of vertices of the bipartite graph $ G_{\MM}$ are vertices $\{1,\ldots, r\}$, which are  the \emph{left hand side}, and vertices $\{r+1,\ldots,n\}$, which are the \emph{right hand side}.

If $\MM$ is realizable with realization $(I| N)$ then there is an edge $e_{i,j}$ in $G_{\MM}$ if and only if  $ (I| N)_{i,j}\ne 0$.
\end{dfn}

 The associated bipartite graph for the phased matroid $\M_M$ where $M$ is the matrix from Example \ref{runex} is shown in Figure \ref{ABGrunex}.

\begin{figure}[htb]
   \centering
  \def\svgwidth{100pt}
   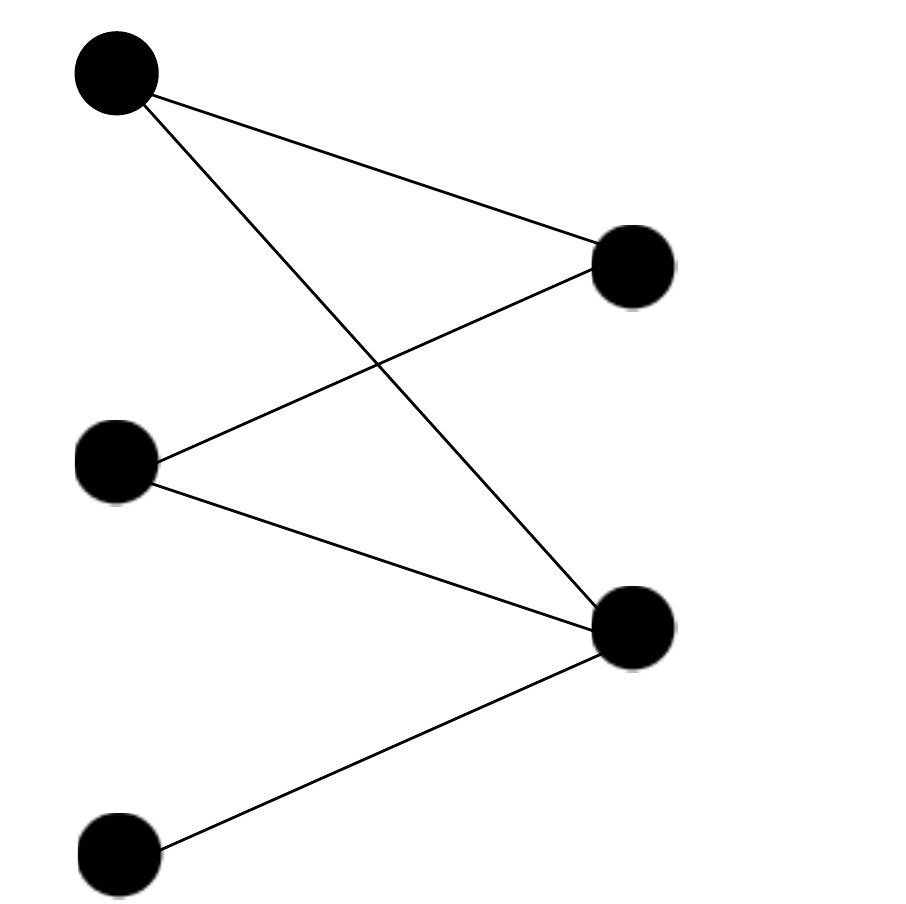 
   \caption{The associated bipartite graph for the $\M_M$.}
\label{ABGrunex}
\end{figure}

Notice that by Corollary \ref{phirophase}, if $\M$ is uniform, then $\G_\M$ is a complete graph. 
\begin{thm}[\cite{BryLuc}]\label{bipartite}
Let $\MM$ be a  rank $r$  matroid on $[n]$ realizable over $\mathbb F$ with $(I|N)\in \FF^{r\times n}$ a realization of $\MM$. Let $F=\{t_1,\ldots,t_{n-k}\}$ be a spanning
forest of $G_{\MM}$.   
Let $(s_1,\ldots,s_{n-k})$ be an ordered $n-k$ tuple of non-zero elements of ~$\FF$. 
Then $\MM$ has a unique realization  $(I| \widetilde N)\in\mathbb F^{r\times n} $ such that for each $i\in[n-k]$, the  entry of $\tilde N$ corresponding to $t_i$ is $s_i$.  

In fact, $(I| \widetilde N)$ can be obtained from $(I| N)$ by a sequence of row and column scalings. Hence $(I| \widetilde N)$ is scaling equivalent to $(I| N)$.

If $\MM$ is uniform, then $F$ is a tree. 
 \end{thm}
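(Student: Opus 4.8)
The four things to establish are existence of $(I\,|\,\widetilde N)$, its uniqueness, the fact that it is reached from $(I\,|\,N)$ by row and column scalings, and that $F$ is a tree when $\MM$ is uniform; throughout, $k$ denotes the number of connected components of $G_{\MM}$, so that a spanning forest has $n-k$ edges. The plan is to pin down the group of \emph{admissible scalings} — those row-and-column rescalings that preserve the normalized shape $(I\,|\,\cdot)$ — and show it acts on the entries of $N$ through a single function on the vertex set of $G_{\MM}$. First I would check that multiplying $(I\,|\,N)$ on the left by $D(c_1,\ldots,c_r)$ and on the right by $D(d_1,\ldots,d_n)$ gives a matrix of the form $(I\,|\,N')$ precisely when $d_i = c_i^{-1}$ for all $i \in [r]$; writing $v(i) := c_i$ for $i \le r$ and $v(j) := d_j$ for $j > r$, the new matrix has entries $N'_{ij} = v(i)v(j)N_{ij}$. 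So an admissible scaling multiplies the entry on an edge $e_{ij}$ of $G_{\MM}$ by $v(i)v(j)$ and leaves every zero entry zero; in particular $(I\,|\,N')$ is again a realization of $\MM$, since rescaling only multiplies each minor by a nonzero scalar and hence preserves the collection of bases. I would also isolate the ``gauge'' subgroup: an admissible scaling fixes $N$ exactly when $v$ is a constant $c$ on one side of the bipartition within each component of $G_{\MM}$ and $c^{-1}$ on the other side — this is where bipartiteness of $G_{\MM}$ is used — so the admissible scalings act with a $k$-parameter kernel, leaving $n-k$ effective parameters, matching the $n-k$ edges of a spanning forest.

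\emph{Existence and scaling equivalence.} Given $F$ and the targets $s_1,\ldots,s_{n-k}$, I would construct $v$ by propagation: write $F$ as a disjoint union of spanning trees of the $k$ components, root each tree, assign $v$ an arbitrary nonzero value at each root, and sweep outward so that when a vertex $w$ is first reached along the forest edge $t$ joining it to its already-assigned neighbour $u$, we solve $v(u)\,v(w)\cdot(\text{entry on }t) = s_t$ for $v(w)$. This has a solution $v(w) \in \FF\cross$ because the entry on $t$ is nonzero ($t$ is an edge of $G_{\MM}$, so the corresponding entry of $N$ is nonzero by Definition~\ref{abgdef}) and $s_t \neq 0$; since $F$ spans $[n]$ this defines $v$ on all of $[n]$, and each forest edge, being the parent edge of exactly one vertex in its rooted tree, gets its target value exactly once. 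The resulting matrix $(I\,|\,\widetilde N)$ is by construction obtained from $(I\,|\,N)$ by row and column scalings, hence scaling equivalent to it, hence a realization of $\MM$, and it carries the prescribed entries on $F$. A short check shows $\widetilde N$ does not depend on the arbitrary root values: multiplying a root value by $c$ multiplies $v$ by $c$ on one side of that component's bipartition and by $c^{-1}$ on the other, and since every edge of $G_{\MM}$ joins the two sides, each product $v(i)v(j)$ — and therefore $\widetilde N$ — is unchanged.

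\emph{Uniqueness and the uniform case.} Let $(I\,|\,N^{*})$ be a realization of $\MM$ scaling equivalent to $(I\,|\,N)$ (this is the sense in which the realization is unique, and the form in which the theorem is later applied) that agrees with $(I\,|\,\widetilde N)$ on $F$. As both are in normalized shape, the scaling relating them is admissible, so $N^{*}_{ij} = u(i)u(j)N_{ij}$ for some $u : [n] \to \FF\cross$; set $\gamma := u/v$. Agreement on a forest edge $t$ with endpoints $a,b$ forces $\gamma(a)\gamma(b) = 1$, and the same propagation along each spanning tree shows $\gamma$ equals some scalar $c$ on one side of each component's bipartition and $c^{-1}$ on the other; by bipartiteness $\gamma(i)\gamma(j) = 1$ on every edge of $G_{\MM}$, so $N^{*}_{ij} = v(i)v(j)N_{ij} = \widetilde N_{ij}$ for all $i,j$, \ie $N^{*} = \widetilde N$. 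Finally, if $\MM$ is uniform then by the remark following Corollary~\ref{phirophase} the graph $G_{\MM}$ is complete bipartite, in particular connected, so $k = 1$ and $F$ is a spanning tree. I expect the uniqueness half to be the main obstacle: one must see that fixing the entries on a mere spanning forest already fixes the entire matrix, and the reason — that the only residual freedom on each connected component is a single scalar $c$, which cancels as $c \cdot c^{-1}$ across the bipartition — is precisely what makes the $n-k$ forest edges exhaust the effective freedom in the scaling group.
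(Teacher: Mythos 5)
The paper never proves this statement---it is imported from \cite{BryLuc} (cf.\ \cite{oxley}, Section~6.4)---so there is no in-paper argument to compare against; judged on its own, your proof is the standard Brylawski--Lucas scaling argument and it is sound. The reduction to admissible scalings $N'_{ij}=v(i)v(j)N_{ij}$ with $v\colon[n]\to\FF^{\times}$ (forced by preserving the identity block), the propagation of $v$ along a rooted spanning tree of each component of $G_{\MM}$ (legitimate because forest edges carry nonzero entries by Definition~\ref{abgdef}), and the uniqueness step via $\gamma=u/v$ being a constant $c$ on one side and $c^{-1}$ on the other side of each component's bipartition are exactly the classical proof. One point you handled correctly and that deserves emphasis: as literally phrased, ``$\MM$ has a unique realization $(I|\widetilde N)$ with prescribed forest entries'' is false for matroids that are not uniquely representable over $\FF$. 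For instance, over $\mathrm{GF}(5)$ the matrices $\left(\begin{smallmatrix}1&0&1&3\\0&1&1&1\end{smallmatrix}\right)$ and $\left(\begin{smallmatrix}1&0&1&2\\0&1&1&1\end{smallmatrix}\right)$ both realize $U_{2,4}$ and both carry the value $1$ on the spanning tree $\{e_{1,3},e_{2,3},e_{2,4}\}$, yet they are not scaling equivalent. So uniqueness can only mean uniqueness within the scaling-equivalence class of the given realization $(I|N)$, which is precisely what you prove and is the form in which the paper later uses the result (Corollary~\ref{realspacecanonical} and the canonical form). The only caveats are cosmetic: the complete bipartite graph in the uniform case is connected only when $r<n$ (the only case of interest here), and your parameter-counting remark about the $k$-dimensional kernel of the scaling action, while correct, is not needed for the argument.
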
 
As a result of Theorem \ref{bipartite}, we see that given a matrix, $(I| N)$,  whose associated bipartite graph has $k$ connected components, we can determine $n-k$ entries of $N$ to be any values we want. We will utilize this by choosing $(s_1,\ldots,s_{n-k})$ to be an $n-k$ tuple of ones. The next definition helps us decide which spanning tree we will use. With our choice of an $n-k$ tuple of ones, and a particular spanning tree, we can build an $r\times n$ array that might be a realization of $\MM$, if we can determine the real lengths of the entries that are not determined by the associated bipartite graph. 

Theorem \ref{bipartite} can be applied to phirotopes regardless of whether or not they are realizable. We will be able to use this and the tools to come to build an array, which will become a realization of the phirotope if and only of the phirotope is realizable. 
\begin{cor} Let $\phi$ be a rank $r$ phirptope $\phi:[n]^r\Rightarrow \S\cup \{0\}$. Let $F=\{t_1,\ldots,t_{n-k}\}$ be a spanning
forest of $G_{\MM_{\phi}}$.  Let $(s_1,\ldots,s_{n-k})$ be an ordered $n-k$ tuple of non-zero elements of ~$\FF$. Then there exists a $\rho$ such that $(-1)^{r-i}\p^{\rho}(1,\ldots,\hat i,\ldots,r,j)=s_l$ for all $e_{i,j}\in T$ and for each associated  $s_l\in S.$   \label{bipartphiro}
\end{cor}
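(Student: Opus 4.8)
\textbf{Proof proposal for Corollary~\ref{bipartphiro}.}

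The plan is to reduce the statement to Theorem~\ref{bipartite} by passing from the phirotope $\phi$ to an \emph{arbitrary} array that realizes the prescribed phase data, applying the matroid-level scaling result, and then reading back the rephasing that performs the scaling. First I would observe that, by Corollary~\ref{phirophase}, once we know $\{1,\dots,r\}$ is a basis (which we may assume, reordering if necessary since $G_{\MM_\phi}$ is connected on the relevant side), the phirotope $\phi$ determines the \emph{phases} $\ph((I|N)_{i,j})=(-1)^{r-i}\phi(1,\dots,\hat i,\dots,r,j)$ of every entry of any potential realization. So I would first build a concrete array $(I|N_0)$ over $\C$ by setting each entry $(N_0)_{i,j}$ to be the complex number whose phase is that prescribed value and whose norm is, say, $1$ (and $0$ exactly when $e_{i,j}\notin G_{\MM_\phi}$). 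This array has the same associated bipartite graph $G_{\MM_\phi}$ as the phirotope, since the zero pattern agrees.

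Next I would apply Theorem~\ref{bipartite} to $(I|N_0)$ with the spanning forest $F=\{t_1,\dots,t_{n-k}\}$ and the prescribed tuple $(s_1,\dots,s_{n-k})$: this produces a matrix $(I|\widetilde N)$ that is scaling equivalent to $(I|N_0)$ — obtained from it by row and column scalings $D_{\text{row}}$, $D_{\text{col}}$ — and whose entry at each $t_l=e_{i,j}$ equals $s_l$. The row scalings correspond to multiplication on the left by a diagonal (hence invertible) matrix; by Proposition~\ref{phiromatrix} and the remark that left multiplication by $A\in GL(r,\C)$ only rotates the chirotope globally, the row scalings do not affect which phased matroid (up to the global $\S$-ambiguity) the array realizes, and more importantly they do not change the \emph{phases} of the maximal minors relative to one another. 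The column scalings, by contrast, are exactly a rephasing: writing $D_{\text{col}}=D(\rho_1,\dots,\rho_n/|\cdot|)$, and taking $\rho=(\ph(\rho_1),\dots,\ph(\rho_n))\in(\S)^n$ to be the vector of phases of the column scalars, Lemma~\ref{rephaserealiz} (together with Definition~\ref{rephase}) gives that the phases of the maximal minors of $(I|\widetilde N)$ are precisely $\phi^\rho$. In particular $(-1)^{r-i}\phi^\rho(1,\dots,\hat i,\dots,r,j)=\ph(\widetilde N_{i,j})$, and for $e_{i,j}\in F$ this equals $\ph(s_l)$; choosing the $s_l$ themselves to lie on $\S$ (or, in the general statement, interpreting the conclusion as an equation of phases) yields $(-1)^{r-i}\phi^\rho(1,\dots,\hat i,\dots,r,j)=s_l$ as desired.

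The main obstacle I anticipate is bookkeeping the interaction between the row scalings and the sign/phase conventions: Theorem~\ref{bipartite} gives scaling equivalence via \emph{both} row and column operations, and I must argue cleanly that only the column part shows up in $\rho$ while the row part is absorbed into the $GL(r,\C)$-action (and hence into the realization-class ambiguity), without disturbing the prescribed off-diagonal entries — here the fact that $\widetilde N$ is normalized so that the identity block $I$ is preserved is what pins the row scalings down in terms of the column scalings. A secondary technical point is the $(-1)^{r-i}$ factor from Corollary~\ref{phaseminor}/Corollary~\ref{phirophase}: I would track it uniformly by always working with the entry $(I|\widetilde N)_{i,j}$ rather than the minor directly, so that the sign appears once on each side and cancels. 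Everything else is a direct citation of Theorem~\ref{bipartite}, Lemma~\ref{rephaserealiz}, and Corollary~\ref{phirophase}, so the proof should be short.
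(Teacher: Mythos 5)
Your overall route --- build an array $(I|N_0)$ whose entry phases are dictated by $\phi$ via Corollary~\ref{phirophase}, scale it using Theorem~\ref{bipartite}, and read a rephasing off the column scalings --- is exactly the reduction the paper intends (its proof is a one-line citation of Theorem~\ref{bipartite} and Corollary~\ref{phaseminor}). However, there is a concrete gap at the point where you declare the row scalings harmless. Write $(I|\tilde N)=D_1(I|N_0)D_2$ with $D_1=D(d_1,\dots,d_r)$ the row scalings; preserving the identity block forces the $i$-th diagonal entry of $D_2$ to be $d_i^{-1}$ for $i\le r$. With your choice $\rho=$ (phases of the diagonal of $D_2$), a direct computation gives
\begin{equation*}
(-1)^{r-i}\,\p^{\rho}(1,\dots,\hat i,\dots,r,j)\;=\;\ph(\det D_1)^{-1}\,\ph(\tilde N_{i,j}),
\end{equation*}
so the desired identity on the forest edges holds only up to the global factor $\ph(\det D_1)$, which is not $1$ in general. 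Equivalently, $\p_{(I|\tilde N)}=\ph(\det D_1)\,\p_{(I|N_0)}^{\rho}$, so your assertion that ``the phases of the maximal minors of $(I|\tilde N)$ are precisely $\phi^\rho$'' is false as stated; and this factor cannot be absorbed into the $GL(r,\C)$-ambiguity, because the corollary asserts an exact equality for the specific phirotope $\p^\rho$, not for its phased matroid ($\S$-orbit). A secondary inaccuracy: $\p_{(I|N_0)}$ agrees with $\p$ only on the tuples $(1,\dots,\hat i,\dots,r,j)$, not on all of $[n]^r$, so any statement about maximal minors of $(I|\tilde N)$ must be restricted to those tuples --- which, as you note, are the only ones needed.

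The gap is fixable with one extra step. Since $D_1(I|N_0)D_2=(\gamma D_1)(I|N_0)(\gamma^{-1}D_2)$ for any nonzero $\gamma\in\C$, choose $\gamma$ with $\ph(\gamma)^{r}=\ph(\det D_1)^{-1}$ (possible since every element of $\S$ has $r$-th roots) and take $\rho$ to be the diagonal phases of $\gamma^{-1}D_2$; the unwanted factor then becomes $1$ and the forest entries give $(-1)^{r-i}\p^{\rho}(1,\dots,\hat i,\dots,r,j)=\ph(s_l)$, which is the stated conclusion once the $s_l$ are read as phases (as you already observe, the equality can only hold literally when $s_l\in\S$). Alternatively you can bypass matrices entirely: rephasing acts on the phase array $P_{i,j}:=(-1)^{r-i}\p(1,\dots,\hat i,\dots,r,j)$ by $P_{i,j}\mapsto a_i b_j P_{i,j}$ with $a_i=\rho_i^{-1}\prod_{m\le r}\rho_m$ and $b_j=\rho_j$, and every needed pair of row/column phase scalings arises this way; running the spanning-forest argument behind Theorem~\ref{bipartite} multiplicatively in $\S$ then produces $\rho$ directly, with no determinant bookkeeping.
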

The proof of this follows from Theorem \ref{bipartite} and Corollary \ref{phaseminor}.

\begin{dfn}[Canonical Spanning Tree] For each connected component in an associated bipartite graph $G$, with corresponding array $A$, we say the \emph{canonical spanning tree} of the component is the tree containing all edges associated to the first non-zero entry in each row of $M$, the last non-zero entry in each column of $A$ (assuming it does not make a cycle in $G$), and any other edges of $G$ corresponding to the last non-zero, entry in each column of $A$ that has not already been included, and does not create a cycle.
\end{dfn}

\begin{example} The canonical spanning tree for the Associated Bipartite Graph in Example \ref{ABGrunex} is seen as a collection of red dashed edges in Figure \ref{ABGrunexspan}.
\begin{figure}[htb]
   \centering
  \def\svgwidth{100pt} 
   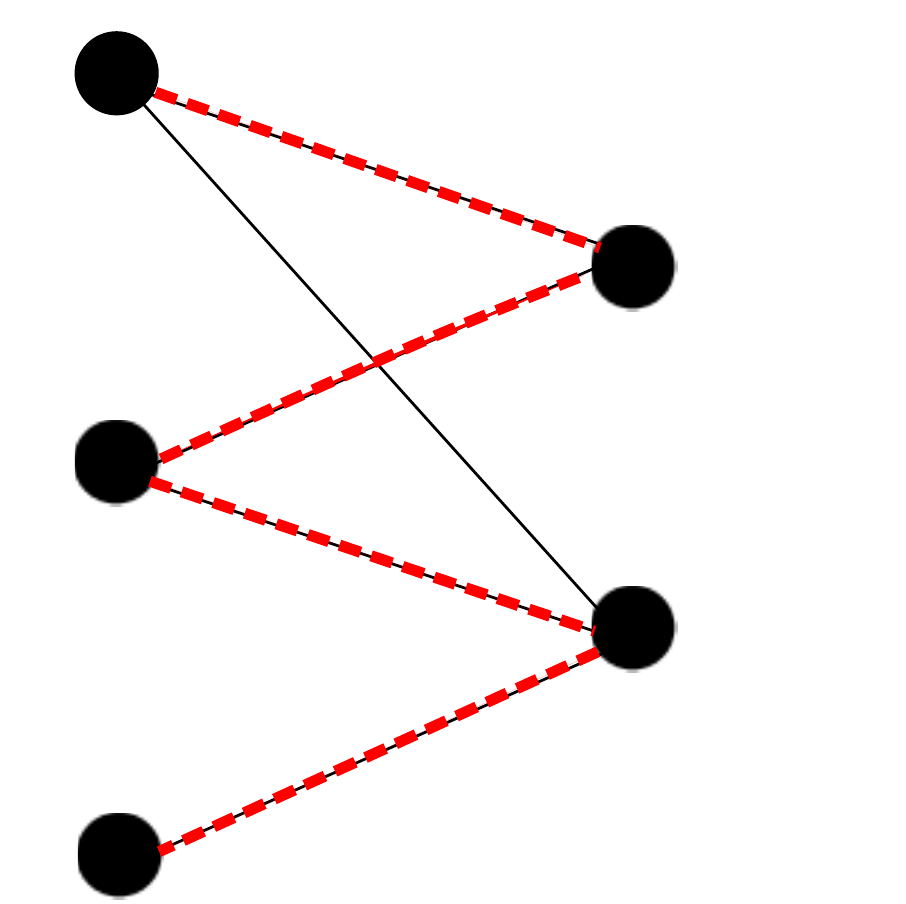 
   \caption{The associated bipartite graph for the $\M_M$.}
\label{ABGrunexspan}
\end{figure}

\end{example}
Therorem \ref{bipartite} is very useful to us because by Theorem \ref{equivrealspace}, scaling equivalent matrices give rise to phased matroids with  realization spaces with homeomorphically equivalent topology.  We will use this to distinguish a \emph{canonical realization} of a phased matroid that we will use to explore the topology of the realization space of any phased matroid in its equivalence class.

\begin{dfn}[Canonical form of a realization of $\M$]\label{canonicarealiz}
A realization $(I|\tilde N)$ of a rank $r$, uniform phased matroid  is  in \emph{canonical form} if the $n-k$ entries of $N$ that correspond to the canonical spanning tree of $G_\M$ are $1$.   A  uniform phirotope $\p$ of $\M$ is in \emph{canonical form} if  $\p(1,\ldots, r-1,j)=1$ for all $j\ge d$, and $\p(1,\dots, \hat i,\ldots, r+1)=(-1)^{i+r}$ for all $i<r$.
\end{dfn}

\begin{example}\label{runexcan} The realization of $\M_M$ from Example \ref{runex} which is in canonical form would look like the following (we will get to the unknown entry $*$ soon). 
\[\left(
\begin{array}{ccccccc}
    1& {}  0& 0& {1}&{*}  \\
 0 & {1}  & 0& {1} &  { 1}   \\
 0 & {0}  & 1 & {0} &  {{1}}  
\end{array}
\right)
\]
There may not be such a realization in $\R(\M_M)$ but there is such a realization in $\R(\M_M^\p)$ for  $\p=(e^{i\frac{3\pi}{2}},e^{i\frac{7\pi}{4}},1,e^{i\frac{7\pi}{4}},1)$. The values of $\p$ come from the diaganol matrix $D$ in Example \ref{runexscale}.
Since this matrix
is scaling equivalent to a real matrix, by Proposition \ref{chirotopal} the phased matroids $\M_M$ and $\M_M^\p$ are essentially oriented and the unknown value $*$ is real.
 
\end{example}
\begin{cor}\label{canonEO} Let $\M$ be a  rank $r$ realizable  phased matroid on $[n]$. Let  $\p$ be the phirotope of $\M$ in canonical form. Let $(I|\tilde N)$ be a realization of $\M$ in canonical form.  Then $\M$ is essentially oriented if and only if  $\im(\p)\subseteq\{-1,0+1\}$ and $(I|\tilde N) \in\RR^{r\times n}$. \end{cor}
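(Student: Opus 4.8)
The plan is to prove both implications directly, exploiting the rigidity of the canonical normalization rather than any deformation of realizations. I would begin with the bookkeeping observation that the phirotope of a \emph{canonical} realization is itself in \emph{canonical} form, so that $\p=\p_{(I|\tilde N)}$: by Corollary \ref{phaseminor} with $k=1$ one has $\ph(\tilde N_{i,j})=(-1)^{r-i}\,\p_{(I|\tilde N)}(1,\dots,\hat i,\dots,r,j)$, and the entries forced to equal $1$ by the canonical spanning tree are precisely those indexing the conditions of Definition \ref{canonicarealiz}; matching them up gives the claim. From here on $\p$ is the phirotope of $(I|\tilde N)$.

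The direction ``$\Leftarrow$'' is immediate: if $(I|\tilde N)\in\RR^{r\times n}$, it is a real realization of $\M=\M^{(1,\dots,1)}$, so condition (5) of Proposition \ref{chirotopal} holds, whence $\M$ is essentially oriented. (In fact $\im(\p)\subseteq\{-1,0,+1\}$ is then automatic, since every minor of a real matrix is real, so this implication really only needs the realness of $(I|\tilde N)$.)

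For ``$\Rightarrow$'', suppose $\M$ is essentially oriented. By Proposition \ref{chirotopal}(4) there are a phirotope $\psi$ of $\M$ and $\rho=(\rho_1,\dots,\rho_n)\in(\S)^n$ with $\im(\psi^\rho)\subseteq\{-1,0,+1\}$; since $\p=\beta\psi$ for some $\beta\in\S$, this gives $\im(\p^\rho)\subseteq\{-\beta,0,\beta\}$. Now I would feed the canonical identities for $\p$ through the rephasing formula $\p^\rho(\lambda)=\rho_{\lambda_1}\cdots\rho_{\lambda_r}\,\p(\lambda)$. The identities $\p(1,\dots,r-1,j)=1$ (for $j\ge r$) give $\rho_1\cdots\rho_{r-1}\rho_j\in\{-\beta,\beta\}$, hence $\rho_j/\rho_r\in\{-1,1\}$ for all $j\ge r$; the identities $\p(1,\dots,\hat i,\dots,r+1)=(-1)^{i+r}$ (for $i<r$), divided by the $j=r+1$ case of the first family, give $\rho_i/\rho_r\in\{-1,1\}$ for $i<r$. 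Thus $\rho_i=\pm\rho_r$ for every $i$; set $\gamma:=\rho_r$. Then every product $\rho_{\lambda_1}\cdots\rho_{\lambda_r}$ lies in $\{-\gamma^r,\gamma^r\}$, so $\p(\lambda)=(\rho_{\lambda_1}\cdots\rho_{\lambda_r})^{-1}\p^\rho(\lambda)\in\{-\beta\gamma^{-r},0,\beta\gamma^{-r}\}$ for all $\lambda$. Since $\p(1,\dots,r)=1$ lies in this set, $\beta\gamma^{-r}=\pm1$, so $\im(\p)\subseteq\{-1,0,+1\}$; and then $(I|\tilde N)\in\RR^{r\times n}$ follows from Corollary \ref{phaseminor} with $k=1$ exactly as in the first paragraph.

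I expect the genuine content to be concentrated in the middle step of ``$\Rightarrow$'': essential orientability only asserts that \emph{some} rephasing of \emph{some} phirotope has image in $\{-1,0,+1\}$, and what must be shown is that the canonical normalization is tight enough that the relevant $\rho$ can only act as a global sign ($\rho_i=\pm\rho_r$), so rephasing changes nothing beyond an overall $\pm1$. The one point to be careful about is that the index sets $(1,\dots,r-1,j)$ and $(1,\dots,\hat i,\dots,r+1)$ are bases of $\MM$, so the corresponding $\rho$-products are nonzero and can be cancelled; this is guaranteed by the canonical-form values being nonzero (and is automatic in the uniform case). Everything else is routine use of Corollary \ref{phaseminor} and Proposition \ref{chirotopal}.
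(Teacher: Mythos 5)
Your argument is correct, and in outline it matches the paper's: identify $\p$ with $\p_{(I|\tilde N)}$, read the phases of $\tilde N$ off $\p$ via Corollary \ref{phaseminor}, and use the normalization $\p(1,\ldots,r)=1$ to fix the global phase. The genuine difference is at the crux of the forward direction. The paper's proof opens by asserting that essential orientation gives $\im(\p)\subseteq\{-\alpha,0,\alpha\}$ for the canonical phirotope $\p$ itself, but Proposition \ref{chirotopal} only guarantees this for some rephasing $\p^\rho$, and for a non-canonical phirotope the unrephased assertion is simply false; the paper never justifies why the rephasing can be dispensed with. Your middle paragraph supplies exactly that missing rigidity argument: the canonical values $\p(1,\ldots,r-1,j)=1$ and $\p(1,\ldots,\hat i,\ldots,r+1)=(-1)^{i+r}$ force $\rho_i=\pm\rho_r$ for every $i$, so any orienting rephasing acts only as a global sign, and then $\p(1,\ldots,r)=1$ pins the residual constant to $\pm1$, giving $\im(\p)\subseteq\{-1,0,+1\}$ and hence real entries. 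You also make the converse explicit via condition (5) of Proposition \ref{chirotopal}, which the paper leaves tacit. So your route buys a complete proof of the step the paper only states, at the cost of a slightly longer computation. The remaining caveat is one you flag yourself and is really a defect of the statement rather than of your proof: Definition \ref{canonicarealiz} defines canonical form only for uniform phased matroids, whereas the corollary is phrased for arbitrary realizable ones; in the non-uniform case the entries forced to $1$ come from the canonical spanning tree, so your cancellation argument needs the corresponding index sets to be bases, exactly the nonvanishing hypothesis you note.
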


\begin{proof} Suppose $\M$ is a realizable essentially oriented phased matroid with phirotope $\p$. 
Then $\im(\p)\in\{-\alpha,0,\alpha\}$ for $\alpha\in\S.$ If $\p$ is in canonical form, then $\p(1,\ldots, r)=1$. So $\alpha=1$ and $\im(\p)\in\{-1,0,+1\}$. In particular, $\p(1,\ldots,\hat i, \dots,d,j)=\ph((I|\tilde N)_{i,j})\in\{-1,0,+1\}$, so $(I|\tilde N)\in\RR^{r\times n}$. 
\end{proof}

 The canonical form of a realization of  a  realizable uniform phased matroid $\M$ is\[ (I|\tilde{N})=\left(\begin{array}{c|c}{I} & \begin{array}{c}1\\\vdots\\1\end{array}
 \begin{array}{c}
\begin{array}{|cc}&N\\&\\\hline \end{array}\\
\\\begin{array}{cc}\cdots & 1\end{array}
\end{array}
\end{array}\right).
\]

\begin{cor}\label{realspacecanonical}Let $\M$ be a  uniform rank $r$ phased matroid on $[n]$. There exists $\rho\in(\S)^{n}$ such that
 $$\R(\M)\cong\{(I|\tilde N)\mid(I|\tilde N) \text{ is  a  realization of } \M^\rho\text{ in canonical form}\}\times\Rp^{n-1}.$$ 
\end{cor}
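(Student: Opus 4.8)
The plan is to peel off the degrees of freedom in a realization one at a time, using the scaling‐equivalence machinery already developed. Recall that by Definition~\ref{Rspace} an element of $\R(\M)$ is a $GL(r,\C)$-orbit of realizations $M\in\C^{r\times n}$. Since $\M$ is uniform, every $r$-subset of $[n]$ is a basis, so in particular $\{1,\dots,r\}$ is a basis and each orbit contains a representative of the form $(I|N)$; moreover this representative is unique, because if $(I|N)$ and $(I|N')$ are both realizations of $\M$ then the $GL(r,\C)$ element carrying one to the other must fix $I$, hence be the identity. Thus the map sending $[M]\mapsto$ its unique $(I|\cdot)$ representative identifies $\R(\M)$ homeomorphically with the space of realizations of $\M$ of the form $(I|N)$.

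Next I would bring in the rephasing/scaling apparatus. By Theorem~\ref{bipartite} (applied with $\FF=\C$), together with Corollary~\ref{bipartphiro}, there is a rephasing $\rho\in(\S)^n$ and a choice of $n-1$ entries (the canonical spanning tree of the complete bipartite graph $G_\M$, which is a tree since $\M$ is uniform, so $k=1$) forced to equal $1$, giving a canonical-form realization. Concretely: starting from an arbitrary realization $(I|N)$ of $\M$, scaling rows and columns by unit-modulus numbers does not change which phased matroid-up-to-rephasing we see, and Theorem~\ref{bipartite} says we can additionally scale by positive reals to normalize $n-1$ designated entries to $1$. So I would fix one such $\rho$ once and for all, and show that every realization $(I|N)$ of $\M$ can be written uniquely as $(I|\tilde N)\cdot D(\text{positive reals})$ where $(I|\tilde N)$ is the unique canonical-form realization of $\M^\rho$ and the diagonal matrix records the $n$ positive scaling factors --- except that scaling the first column of $(I|\tilde N)$ can be absorbed into a $GL(r,\C)$ change of coordinates (a scalar matrix), so only $n-1$ of those positive reals survive in the quotient. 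This is exactly the product decomposition asserted: $\R(\M)\cong\{\text{canonical-form realizations of }\M^\rho\}\times\Rp^{n-1}$.

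The key steps, in order, are: (1) identify $\R(\M)$ with the space of $(I|N)$-realizations of $\M$ using uniformity and uniqueness of the reduced representative; (2) invoke Theorems~\ref{equivrealspace} and~\ref{bipartite} to produce the rephasing $\rho$ and the canonical spanning tree, so that every realization of $\M^\rho$ is scaling equivalent to a canonical-form one; (3) set up the explicit homeomorphism $(I|N)\mapsto\bigl((I|\tilde N),\,(s_1,\dots,s_{n-1})\bigr)$, where $(s_i)$ are the positive norms of the spanning-tree entries needed to pass from canonical form back to $(I|N)$, after rephasing by $\rho$ and modding out the overall column-$1$ scaling by $GL(r,\C)$; (4) check continuity of this map and its inverse (both are built from the continuous operations of multiplying by diagonal matrices and reading off entries, exactly as in the proof of Theorem~\ref{equivrealspace}), and check bijectivity using the uniqueness clause of Theorem~\ref{bipartite}.

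The main obstacle I anticipate is bookkeeping the dimension count correctly: naively one has $n$ positive scaling parameters from the $n$ columns (the $r$ diagonal entries of a left $GL$-factor being already used up to put the matrix in $(I|\cdot)$ form), but the scalar subgroup $\C^\times\cdot I\subset GL(r,\C)$ that still acts trivially on the phirotope lets one rescale a row --- equivalently, one column's positive factor is redundant --- which is what drops the count from $\Rp^{n}$ to $\Rp^{n-1}$. I would make this precise by choosing the canonical spanning tree so that exactly one of its $n-1$ edges meets column~$1$ only through the identity block, or more cleanly, by observing that Theorem~\ref{bipartite} already normalizes $n-1$ (not $n$) entries because the graph is connected, so the free positive data is genuinely $(n-1)$-dimensional and no further quotient is needed beyond the one already taken to reach $(I|\cdot)$ form. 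Verifying that these two accountings agree --- that the $\Rp^{n-1}$ coming from ``positive norms of non-tree entries are free'' is the same as the $\Rp^{n-1}$ coming from ``$n$ column scalings minus one redundant scalar'' --- is the one spot where I would be careful rather than routine.
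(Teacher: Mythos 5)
Your argument is essentially the paper's own proof, only spelled out in more detail: pass to $\M^\rho$ via Theorem~\ref{equivrealspace}, use Theorem~\ref{bipartite} (with Corollary~\ref{bipartphiro}) to normalize the $n-1$ canonical spanning-tree entries, and let the positive norms of those tree entries supply the $\Rp^{n-1}$ coordinates, with uniqueness in Theorem~\ref{bipartite} giving bijectivity. Your ``cleaner'' accounting at the end (the connected bipartite graph already yields exactly $n-1$ normalizable entries, so no extra quotient is needed) is precisely the bookkeeping the paper uses, so the proposal is correct and matches the paper's route.
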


\begin{proof} The existence of $(I|\tilde N)$  follows from Theorem \ref{bipartite}. Using Theorem \ref{bipartite}, in building canonical realization $(I|\tilde N)$, we chose the values associated with each of the $n-1$ edges of our conanical spanning tree to be $1$.  It is clear that if we had chosen any other set of positive real values  $(s_1,\ldots,s_{n-1})\in\Rp^{n-1}$, the result would be a different matrix in $ \R(\M^{\rho})$. This $n-1$ degrees of freedom  is where the $\Rp^{n-1}$ comes from. 
The equivalence follows from Theorem \ref{equivrealspace}.\end{proof}

\subsection{Triangle Lemma}\label{dellemma}
  \begin{dfn}[Triangular equation]\label{triangulareqn}
   Let $\alpha, \beta, \gamma\in\S$ such that  $\alpha\ne\pm\beta$. An equation of the form $\gamma=\ph(s_1\alpha-s_2\beta)$,  where $s_1,s_2\in\Rp,$  is called a \emph{triangular equation.}
\end{dfn}

The following Lemma will be used often to prove later results. 
   \begin{lem}[Triangle Lemma]\label{triangle} If $\gamma=\ph(s_1\alpha-s_2\beta)$ is a triangular equation and   either $s_1$ or $s_2$ are known, then the equation can be solved uniquely for the unknown quantity. 
   \end{lem}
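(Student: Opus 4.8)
The plan is to interpret the triangular equation geometrically: the complex numbers $s_1\alpha$ and $-s_2\beta$ are two sides of a triangle emanating from the origin, and $s_1\alpha - s_2\beta$ is the third side, whose phase is prescribed to be $\gamma$. Since $\alpha \neq \pm\beta$, the three directions $\alpha$, $\beta$, $\gamma$ are pairwise non-antipodal in the generic case, and the triangle with prescribed edge-directions is determined up to similarity by its angles alone; fixing the length of one edge (the known $s_i$) then pins down the other edge length uniquely. I would carry this out as follows.

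First I would reduce to the case where $s_1$ is the known quantity (the case of known $s_2$ is symmetric, swapping the roles of $\alpha$ and $\beta$ and replacing $\gamma$ by $-\gamma$, or simply by rearranging $s_2\beta = s_1\alpha - \gamma s$ for the appropriate positive real $s$). Next, writing $\gamma = \ph(s_1\alpha - s_2\beta)$ means precisely that $s_1\alpha - s_2\beta = t\gamma$ for some $t \in \Rp$ (note $t \neq 0$: if $s_1\alpha = s_2\beta$ with $s_1,s_2 > 0$ then $\alpha = \beta$, excluded; and $s_1 > 0$ forces the left side nonzero unless cancellation, again excluded). So the single complex equation $s_1\alpha = s_2\beta + t\gamma$ must be solved for the two unknowns $s_2, t \in \Rp$. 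Regarding $\C$ as a real $2$-dimensional vector space, $\{\beta,\gamma\}$ is a real basis exactly when $\beta$ and $\gamma$ are $\RR$-linearly independent, i.e. $\gamma \neq \pm\beta$. I would split into two cases: (i) if $\gamma \neq \pm\beta$, then $s_1\alpha$ has a unique expansion $s_1\alpha = s_2\beta + t\gamma$ in this basis, giving unique real $s_2, t$; it remains to check both are strictly positive, which I expect to follow from $\alpha \neq \pm\beta$ together with the hypothesis that a solution with $s_1,s_2 \in \Rp$ is presumed to exist (the Lemma is about solving a valid triangular equation, so existence is part of the setup, and uniqueness of the basis expansion then forces the solution to coincide with any genuine one). (ii) If $\gamma = \beta$ or $\gamma = -\beta$, the equation $s_1\alpha = (s_2 \pm t)\beta$ forces $\alpha$ parallel to $\beta$, i.e. $\alpha = \pm\beta$, contradicting the hypothesis $\alpha \neq \pm\beta$; so this case is vacuous (or: the original triangular equation could not hold), and there is nothing to prove.

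The main obstacle I anticipate is the bookkeeping around \emph{existence versus uniqueness} and the positivity of the recovered scalar: the cleanest statement is that the linear-algebra argument shows the unknown is determined (unique), and one must be careful to phrase the conclusion so that it does not claim solvability of an inconsistent equation. I would therefore organize the write-up as: (1) observe $\gamma \neq \pm\beta$ and $\gamma \neq \pm\alpha$ are forced by $\alpha \neq \pm\beta$ whenever the equation is consistent; (2) rewrite as an $\RR$-linear system in the plane with coefficient matrix of columns $\beta, \gamma$ (or $\alpha,\gamma$), which is invertible; (3) conclude the unique real solution, and note that since a triangular equation by definition comes with $s_1,s_2 \in \Rp$, the recovered value is the positive one. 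A secondary subtlety is handling the degenerate subcase $s_2 = 0$ or where the triangle collapses — but $s_1 > 0$ and $\alpha \neq \beta$ prevent $t = 0$, and a separate one-line argument handles whether $s_2$ can vanish (it can only if $s_1\alpha = t\gamma$, i.e. $\alpha = \gamma$, which is allowed and simply gives $s_2 = 0$ uniquely). Everything else is routine planar linear algebra.
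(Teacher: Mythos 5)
Your proof is correct, and it reaches the conclusion by a genuinely different mechanism than the paper, even though you open with the same picture. The paper's proof is purely synthetic: it draws the triangle whose edge directions are $\alpha$, $-\beta$ (equivalently $\beta$ reversed) and $\gamma$, notes that the interior angles $\theta,\psi$ are determined by the three phases, concludes the triangle is determined up to similarity, and then pins it down by the one known side length. Your write-up instead encodes the equation as $s_1\alpha = s_2\beta + t\gamma$ with $t\in\Rp$ the unknown norm, and invokes uniqueness of coordinates with respect to the $\RR$-basis $\{\beta,\gamma\}$ of $\C$ (invertible $2\times 2$ real system), which gives uniqueness of the unknown $s_i$ immediately and lets you dispose of the degenerate configurations ($\gamma=\pm\beta$, $t=0$, positivity) that the paper's two-line argument leaves implicit -- a gain in rigor. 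What the paper's route buys in exchange is the explicit constructive content: the similar-triangle/law-of-sines picture is exactly what produces the closed-form norms $\sin(\theta_j)/\sin(\psi_j)$ used later in the realizability criterion (Proposition \ref{rank2realizability}), whereas your Cramer-style solution yields the same quantities only after unwinding the determinants. One small wrinkle in your proposal: the aside that $s_2=0$ ``is allowed'' when $\alpha=\gamma$ conflicts with the paper's definition of a triangular equation, which requires $s_1,s_2\in\Rp$; in fact consistency plus $\alpha\ne\pm\beta$ rules out $\gamma=\pm\alpha$ and $\gamma=\pm\beta$ altogether, so that case is vacuous rather than an admissible boundary solution -- this does not affect the validity of your main argument.
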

 
   \begin{proof}
   Consider $\gamma, \alpha, $ and $\beta\in\S$. Consider the rays through $\alpha, ~\beta, $ and $\gamma$ in the complex plane. Since $\gamma=\ph(s_1\alpha-s_2\beta)$, we can draw a triangle with interior angles $\theta $ and $\psi$ as seen in Figure \hyperef{trilem}. The angles  $\theta$ and $\psi$ are determined by $\alpha,~\beta,$ and $\gamma$.  Therefore, the triangular equation determines the triangle up to similarity. Since either $s_1$ or $s_2$ is known, the entire triangle is determined by the triangular equation. 
 \begin{figure}[htb]
  \centering
  \def\svgwidth{180pt}
  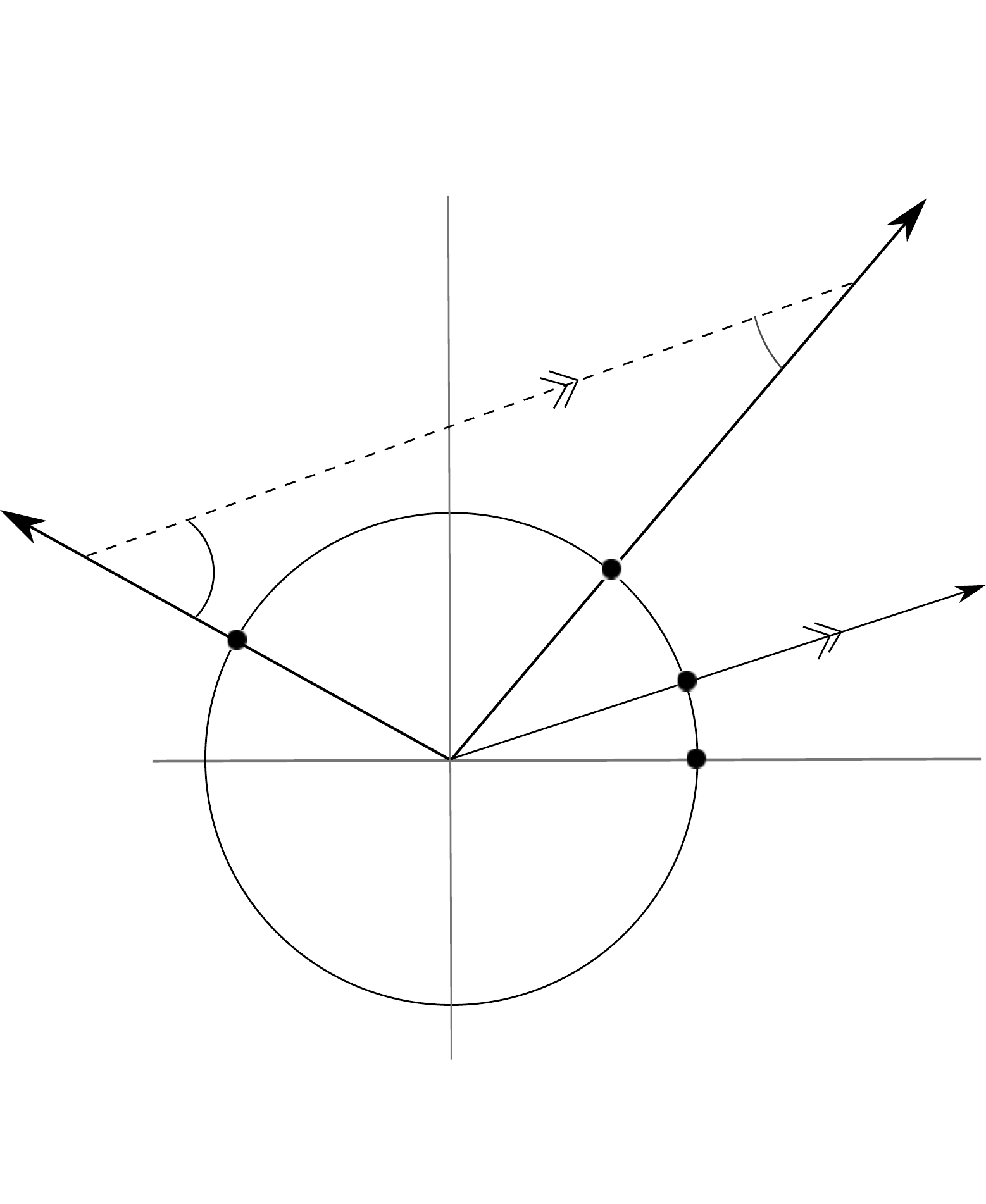
  \caption{The triangle with interior angles $\theta$ and $\psi$ is constructed from the equation $\gamma=\ph(s_1\alpha-s_2\beta)$.  }\label{trilem}
\end{figure}
   \end{proof}


\section{Proof of Theorem \ref{uniformthm}}\label{sec:proof}
Now that we have a canonical realization and the Triangle Lemma, we can prove  our main Theorem:
\begin{thm}[Therorem \ref{uniformthm}]
Let $\M$ be a rank $r$, uniform, not essentially oriented, realizable phased matroid on $[n]$. Then $\R(\M)\cong\Rp^{n-1}$. \end{thm}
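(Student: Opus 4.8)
The plan is to leverage Corollary~\ref{realspacecanonical}, which already reduces the problem to understanding the space of canonical-form realizations of a rephasing $\M^\rho$: if I can show that a not-essentially-oriented uniform realizable phased matroid has a \emph{unique} canonical-form realization $(I|\tilde N)$, then $\R(\M)\cong\{\text{pt}\}\times\Rp^{n-1}\cong\Rp^{n-1}$ and we are done. So the entire content of the proof is the uniqueness of the canonical realization, together with the observation that a canonical realization actually exists (realizability is assumed, and Theorem~\ref{bipartite} upgrades any realization to one in canonical form via scaling equivalence, which by Theorem~\ref{equivrealspace} preserves the realization space).

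The key steps, in order: (1) Fix $\p$ the canonical-form phirotope of $\M$ (Definition~\ref{canonicarealiz}) and let $(I|\tilde N)$ be a canonical-form realization. By Corollary~\ref{phirophase} (Corollary~\ref{phaseminor} with $k=1$), the phirotope determines \emph{all the phases} of $\tilde N$; the canonical normalization fixes the norms of the $n-1$ entries sitting on the canonical spanning tree to be $1$. (2) The remaining unknowns are the positive real norms $s_{i,j}\in\Rp$ of the off-tree entries of $\tilde N$. For each such entry, I want to produce a \emph{triangular equation} in the sense of Definition~\ref{triangulareqn} that pins down $s_{i,j}$ uniquely. This comes from the Grassmann--Plücker relations (\ref{gprels}): picking $2\times 2$ minors of $\tilde N$ and using Corollary~\ref{phaseminor} to translate back to values of $\p$, each three-term Plücker relation among already-known entries plus one unknown entry reads (after clearing phases) as $0\in\boxplus\{\ldots\}$, i.e. a relation of the form $\gamma = \ph(s_1\alpha - s_2\beta)$ with $s_1$ or $s_2$ already known. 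By the Triangle Lemma (Lemma~\ref{triangle}) this solves uniquely for the unknown norm — \emph{provided} $\alpha\neq\pm\beta$, which is exactly the point where ``not essentially oriented'' must be invoked: in the essentially-oriented case the relevant phases are collinear in $\C$, the triangle degenerates, and a free positive parameter survives (that is the $k>0$ of Lemma~\ref{rigid}). (3) Order the off-tree entries so that at each stage the triangular equation involves only previously-determined quantities — a spanning-tree/induction argument on the complete bipartite graph $\G_\M$ (complete since $\M$ is uniform, by the remark after Definition~\ref{abgdef}) — and conclude that $(I|\tilde N)$ is forced entirely. Hence the canonical realization is unique, and Corollary~\ref{realspacecanonical} gives $\R(\M)\cong\Rp^{n-1}$.

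The main obstacle is step (2)–(3): carefully choosing, for \emph{each} off-tree position, a Grassmann--Plücker relation that (a) involves exactly one not-yet-determined entry, and (b) has its two ``pivot'' phases $\alpha,\beta$ genuinely non-collinear, so the Triangle Lemma applies. Establishing (b) in general is where the hypothesis ``not essentially oriented'' does real work, and I expect the argument to require showing that if \emph{every} such relation degenerated, one could rephase $\p$ into $\{-1,0,+1\}$, contradicting Proposition~\ref{chirotopal}. I would organize this as: first handle the rank-$2$ case (Lemma~\ref{rigid}) directly — there $\G_\M$ is a path-like complete bipartite graph on $2+(n-2)$ vertices and the triangular equations chain along the columns — then bootstrap to general rank $r$ by restricting to $2\times 2$ minors, since every such minor of $(I|\tilde N)$ is itself (up to the sign of Lemma~\ref{minorsign}) a value of $\p$, effectively reducing higher-rank uniqueness to many interlocking rank-$2$-type triangular equations. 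The sign bookkeeping from Lemma~\ref{minorsign}/Corollary~\ref{phaseminor} is routine but must be tracked so that the $0\in\boxplus\{\cdots\}$ relations genuinely take triangular form with the correct $-$ sign between $\alpha$ and $\beta$.
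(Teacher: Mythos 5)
Your proposal follows essentially the same route as the paper: reduce via Corollary \ref{realspacecanonical} to showing the canonical-form realization is unique, read off all phases of $\tilde N$ from the phirotope (Corollary \ref{phirophase}), and determine each unknown norm $s_{i,j}$ by a triangular equation obtained from a $2\times 2$ minor via Corollary \ref{phaseminor} and solved with the Triangle Lemma, with the not-essentially-oriented hypothesis supplying the non-real pivot phase. The ordering/degeneracy issue you defer is resolved in the paper by a direct four-case analysis (the entry itself is non-real; a non-real entry in the same row; in the same column; elsewhere in the matrix), so your outline matches the actual proof in substance.
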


As a warm-up, we will prove the case for rank 2 phased matroids, which is stated as Lemma \ref{rigid}. 

\subsection{Rank 2 phased matroids}

For rank 2 phased matroids we can drop the condition that the phased matroid is uniform and not essentially oriented and get a similar result.

Recall, Lemma \ref{rigid} states that for a
 simple, rank $2$, realizable phased matroid $\M$ on $n$ elements,  $\R(\M)\cong\Rp^{n-1+k}$ where  $k=0$ if $\M$ is not essentially oriented and $k>0$ if $\M$ is essentially oriented. 

{This result, for the uniform, rank $2$   phased matroids that are not essentially oriented,  was previously proven in \cite{BKR-G} using cross ratios. Since cross ratios do not generalize to higher dimensions, their proof is not generalizable to phased matroids with  rank $> 2$. For essentially oriented phased matroids the Lemma follows from the fact that all rank-2 oriented matroids have contractible realization spaces \cite{redbook}. 
 }
\subsubsection{Proof of Lemma \ref{rigid}}
\begin{proof} 

 If $\M$ is a rank $2$ phased matroid and is  not uniform,  then either zeros appear as entries of columns $3,\ldots, n$ in the canonical realization or a pair of elements $\{i,j\}$ are parallel.  But any column with a zero  is  either  a loop in $\M$, or is parallel to either $e_1$ or $e_2$. If $e$ is a loop of of $\M$ then $\R(\M/\{i\})\cong\R(\M)$ and if elements $i,j$ are parallel in $\M$ then   $\R(\M\backslash\{i\})\times\Rp\cong\R(\M)$ \cite{mydissertation}. For the case when $\M$ is essentially oriented, the proof follows from analogous results about oriented matroids. So without loss of generality, we may assume $\M$ is a uniform, not essentially oriented, realizable phased matroid. 
 

Consider a canonical realization $$M=\left(\begin{matrix}{1} &{0} &1&  {s_4}{\alpha_4}&\cdots&{s_n}{\alpha_n}\\{0} &{1} &{1}&{1} & \cdots &{1}\end{matrix}\right)$$
of $\M^\rho$. 

Notice that $\p(1,i)$ and $\p(2,j)$  give us the phases of each entry of $N$ so the only unknown information about the realization is the values of $s_4,\ldots,s_n\in\Rp$. In fact 

$\{(I|\tilde N)\mid(I|\tilde N) \text{ is  a canonical realization of } \M^\rho\}$ can be thought of as \begin{equation}\label{point}\{(s_4,\ldots,s_n)\in\Rp^{n-4}\mid s_i=m_{1,i}\mbox{ for  a realization }M\mbox{ of }\M^\rho\}.\end{equation}

Since  $\M$ is not essentially oriented,  for some $j\in\{4,\ldots,n\},~\alpha_j\notin\{-1,+1\}.$  Therefore, by Lemma \ref{triangle}, $s_j$ is determined by the equation $\p(3,j)=\ph(1-s_j\alpha_j)$.  For any other $k\ne j\in\{4,\ldots,n\}$, the equation $\p(k,j)=\ph(s_k\alpha_k-s_j\alpha_j)$ determines $s_k$. 

Since $s_j$ is determined by $\p$ for all $j\in\{4,\ldots,n\}$, the set $\{(I|\tilde N)\mid(I|\tilde N) \text{ is  a canonical realization of } \M^\rho\}$ is a single point. Therefore, by Corollary \ref{realspacecanonical}, $\R(\M)=\Rp^{n-1}$.
\end{proof}

\subsubsection{Proof of Theorem \ref{uniformthm}}

The previous proof  for rank 2 phased matroids provides insight into the proof for rank $r$ uniform phased matroids that are not essentially oriented.
\begin{proof}
 Consider $(I|\tilde N)\in\R(\M)$. Since $\M$ is uniform, all entries of $\tilde N$ are non-zero. Since $\M$ is not essentially oriented, there is at least one  non-real entry in $\tilde N$.

Furthermore, by Corollary \hyperef{phaseminor}, for each $1\le i\le r-1$ and $r+1\le j\le n$, $\alpha_{i,j}$ is determined by $\p$, that is $\alpha_{i,j}=(-1)^{r-i}\p(1,\ldots,i-1,j,i+1,\ldots,n)$. 
Thus, by Corollary \hyperef{realspacecanonical}, it remains to find all $s_{i,j}$ such that 
$1\le i\le r-1,~r+1\le j\le n$. 
 {\begin{displaymath}
(I| \tilde N)=\left(\begin{array}{cccc|cccc}
1&0&\cdots&0&1 & s_{1,d+2}\alpha_{1,d+2} & \cdots & s_{1,n}\alpha_{1,n} \\ 
0&\ddots&0&\vdots&\vdots & \vdots & \ddots & \vdots \\ 
\vdots&0&\ddots&0&1 & s_{r-1,d+2}\alpha_{r-1,r+2} & \cdots & s_{r-1,n}\alpha_{r-1,n} \\ 
0&\cdots&0&1&1 & 1 & \cdots & 1
                \end{array}\right)\in\R(\M).
                \end{displaymath}
}
  Notice, by Lemma \ref{minorsign}, for $H=\{i,m\}$ where $i\le i\le m\le r$, $\sigma= \sum_{k\notin H, k>i}1+\sum_{k\notin H, k>m}1=r-i-1+r-m=2r-i-m-1$, which has the same parity of $i+m+1$. Therefore, 
 \begin{align*}(-1)^{i+m+1}[1,\ldots,\hat i,\ldots,\hat m,\ldots, r,k,j]_{(I|\tilde N)}=\det\left(\begin{array}{cc}
 s_{i,k}\alpha_{i,k}&s_{i,j}\alpha_{i,j}\\
 s_{m,k}\alpha_{m,k}&s_{m,j}\alpha_{m,j}
                \end{array}\right).\end{align*}To determine the value of $s_{i,j}$, there are four cases to consider. In each case, we find a $\lambda$-minor of $(I|\tilde N)$ that is equal to  a $2\times 2$ minor of $(I|\tilde N)$ which results in a triangular equation in which $s_{i,j}$ is the only unknown. 
 
 For Case 1, suppose $\alpha_{i,j}\ne\pm1$. Then  
 \begin{equation}(-1)^{r-i+1}\p(1,\ldots,\hat{i},\ldots, r-1,j)=\ph\left(\det\left(\left|\begin{array}{cc}1 & s_{ij}\alpha_{ij} \\1 & 1\end{array}\right|\right)\right)=
 \ph(1-s_{i,j}\alpha_{i,j})\end{equation} 
 is a triangular equation and $s_{i,j}$ is determined by the Triangle Lemma.  
 
 For the remaining three cases, we assume $\alpha_{i,j}=\pm1$. 
 
  For the second case, suppose  there exists  $k>r+1$ such that $\alpha_{i,k}\ne\pm1$. We know $s_{ik}$ from Case $1$. Without loss of generality, assume $k>j$.  Then 
   \begin{align*}
  (-1)^{r-i+1}\p(1,\ldots,\hat{i},\ldots, r-1,j,k)=&\\
  \ph\left(\det\left(\left|\begin{array}{cc} s_{ij}\alpha_{ij} &s_{i,k}\alpha_{i,k}\\1 & 1\end{array}\right|\right)\right)=&
\ph(s_{i,j}\alpha_{i,j}-s_{i,k}\alpha_{i,k})
\end{align*}  is a triangular equation and $s_{i,j}$ is determined by the Triangle Lemma.
 
 For the final two cases, $s_{i,k}\alpha_{i,k}\in\RR$ for all $r+1<k\le n$. This means the entire row has phase $\pm1$. 
 
 For Case 3, we assume there is a non-real entry in the $j^{th} $ column.  
  The equation  
 \begin{align*}(-1)^{i+m+1}\p(1,\ldots,\hat{i}\ldots,\hat{m}\ldots, r+1,j)=&\\
 \ph\left(\det\left(\begin{array}{cc}
 1 & s_{i,j}\alpha_{i,j} \\
 1 & s_{m,j}\alpha_{m,j} \end{array}\right)\right) 
 =&\ph(s_{m,j}\alpha_{m,j}-s_{i,j}\alpha_{i,j})\end{align*}
 is triangular and $s_{i,j}$ is determined by the Triangle Lemma.  
 
 For the final case, assume all entries in row $i$ and column $j$ are real. There must be a non-real entry $s_{m,k}\alpha_{m,k}\notin\RR$ with $m\ne i, ~k\ne j$.  From previous cases, $s_{m,j}$ and $s_{i,k}$ are determined.  Also, we know $\alpha_{i,j},~\alpha_{i,k},$ and $\alpha_{m,j}$ are all $\pm1$.
 So, \begin{align*}(-1)^{i+m+1}\p(1,\ldots,\hat{i}\ldots,\hat{m}\ldots,k,j)=&\\\ph\left(\det\left(\begin{array}{cc}s_{i,k}(\pm1) & s_{i,j}(\pm1) \\s_{m,k}\alpha_{m,k} & s_{m,j}(\pm1) \end{array}\right)\right) =&\ph(s_{i,k}s_{m,j}-s_{i,j}s_{m,k}\alpha_{m,k})\end{align*} is a triangular equation. So $s_{i,j}$ is determined by the Triangle Lemma.

All $s_i$'s are determined. So\[
\left\{
 \left(\begin{array}{c|c}{I} & \begin{array}{c}1\\\vdots\\1\end{array}
 \begin{array}{c}
\begin{array}{|cc}&N\\&\\\hline \end{array}\\
\\\begin{array}{cc}\cdots & 1\end{array}
\end{array}
\end{array}\right)\vline
\left(\begin{array}{c|c}{I} & \begin{array}{c}1\\\vdots\\1\end{array}
 \begin{array}{c}
\begin{array}{|cc}&N\\&\\\hline \end{array}\\
\\\begin{array}{cc}\cdots & 1\end{array}
\end{array}
\end{array}\right)
\text{ is a realiz. of } \M\right\}\cong(\Rp)^0.
 \]  Therefore,  $\R(\M)=(\Rp)^0\times(\Rp)^{n-1}\cong (\Rp)^{n-1}$.
 \end{proof}

 \section{Realizability criterium} All rank 2 oriented matroids are realizable. In contrast, not all phased matroids are realizable. In this section we give an example of a non-realizable phased matroid, and a simple realizability criterium that can be used to determine in a rank-2, or any other non-essentially oriented, uniform phased matroid is realizable.
\subsection{A non-realizable rank 2 phased matroid}
\begin{example}\label{nonreal2ex}
The phased matroid $\M$ with phirotope $\p$ such that $\p(1,2)=\p(1,3)=\p(1,4)=\p(1,5)=-\p(2,3)=1$, $\p(2,4)=-e^{i\frac{\pi}{2}}$, $\p(2,5)=-e^{i\frac{\pi}{3}},\p(3,4)=e^{i\frac{7\pi}{4}},\p(3,5)=e^{i\frac{5\pi}{3}},$ and $\p(4,5)=e^{i\frac{5\pi}{6}}$ is not realizable. 
\end{example}

\begin{proof}
 It is not hard to check the  combinatorial complex Grassman-Pl\"ucker relations to confirm $\p$ is a phirotope  of a   phased matroid.  Suppose $\M$ is realizable. Since $\p$ is in canonical form, the proof of Lemma \hyperef{rigid}  provides a construction of a potential realization of $\p$. 
 
 The values of $\p$ on all pairs except $\{4,5\}$ determine the following canonical realization of  $\M$: \[M=\left(\begin{array}{ccccc}
 1&0&1&e^{i\frac{\pi}{2}}&e^{i\frac{\pi}{3}}\\
 0&1&1&1&1\end{array}\right),\]  where $\p(2,4)$ and $\p(2,5)$ determine the phases of $M_{1,4}$ and $M_{1,5}$ respectively, and the norm of each entry is determined by  $\p(3,4)$ and $\p(3,5)$. But $\p(4,5)=e^{i\frac{\pi}{6}}\ne\ph(e^{i\frac{\pi}{2}}-e^{i\frac{\pi}{3}})=e^{i\frac{11\pi}{12}}$. So $\M$ is not realizable.  
 \end{proof}
 
 In \cite{BKR-G},  a different method is provided to test the realizability of the above phased matroid in which a phirotope is confirmed to be realizable if an equation of 24 terms sums to 0. Their method can be generalized to uniform rank 2 phased matroids on $n$ elements by checking the sum for all  $n\choose 5$ subsets of elements of the groundset.

  Example \ref{nonreal2ex} sheds light on a \emph{two point realizability criteria} for rank $2$ phased matroids. 
 
 \begin{prop}\label{rank2realizability}Let $\M$ be a uniform, rank $2$, not essentially oriented phased matroid on $[n]$ with phirotope $\p$ in canonical form. Let $\theta_j=\arg(\p(3,j))$ and $\psi_j=\arg(\p(3,j))-\arg(\p(2,j))$. Then $\M$ is realizable if and only if for any pair $j,k\in\{4,\ldots,n\}$, 
  \begin{equation}\label{nonreal2}\p(j,k)=\ph\left(\frac{\sin(\theta_k)}{\sin(\psi_k)}\p(2,k)-\frac{\sin(\theta_j)}{\sin(\psi_j)}\p(2,j)\right).\end{equation}
\end{prop}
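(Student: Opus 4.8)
The plan is to reduce the realizability of $\M$ to the single consistency condition obtained from the canonical construction of Lemma~\ref{rigid}, and then to make the Triangle Lemma quantitative so that the phrase ``$s_j$ is determined by $\p(3,j)=\ph(1-s_j\alpha_j)$'' becomes an explicit formula $s_j=\sin(\theta_j)/\sin(\psi_j)$. First I would recall from the proof of Lemma~\ref{rigid} that, since $\p$ is in canonical form, a putative realization of $\M$ is forced to be
\[
M=\left(\begin{matrix}1 & 0 & 1 & s_4\alpha_4 & \cdots & s_n\alpha_n\\ 0 & 1 & 1 & 1 & \cdots & 1\end{matrix}\right),
\]
where $\alpha_j=\p(2,j)$ (by Corollary~\ref{phirophase}, the phase of the $(1,j)$ entry is $(-1)^{2-1}\p(1,\ldots,\hat 1,\ldots,2,j)=-\p(2,j)$; one checks signs against the canonical-form normalization so that the entry is exactly $\p(2,j)$ up to the conventions fixed in Definition~\ref{canonicarealiz}) and the norms $s_j\in\Rp$ are the only free data. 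Thus $M$ is a genuine realization of $\M^\rho$ (hence $\M$ has a realization, by Theorem~\ref{equivrealspace}/Corollary~\ref{realspacecanonical}) if and only if every remaining Plücker phase $\p(j,k)$ for $j,k\in\{4,\ldots,n\}$ agrees with $\ph(\det M_{j,k})=\ph(s_k\alpha_k-s_j\alpha_j)$, i.e. if and only if equation~\eqref{nonreal2} holds once the $s_j$ are plugged in. So the proposition amounts to (a) showing the $s_j$ are forced to the stated values, and (b) checking no other obstruction exists.

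For step (a), I would work out the geometry behind the Triangle Lemma explicitly in this rank-$2$ situation. We have $\alpha_3=\p(1,3)/\ldots=1$ essentially (the third column is $(1,1)^T$), and the equation fixing $s_j$ is $\p(3,j)=\ph(1-s_j\alpha_j)$, coming from $\det\left(\begin{smallmatrix}1 & s_j\alpha_j\\ 1 & 1\end{smallmatrix}\right)=1-s_j\alpha_j$ together with the canonical-form identity $(-1)^{2-1+?}\p(3,j)=\ph(\text{that minor})$; I will track the sign carefully so that the relation is literally $\p(3,j)=\ph(1-s_j\alpha_j)$ (adjusting by the appropriate $(-1)$ if the bookkeeping in Lemma~\ref{minorsign} demands it, and noting the hypothesis ``not essentially oriented'' guarantees $\alpha_j\ne\pm1$ for at least the relevant indices, while for real $\alpha_j$ a degenerate but still valid formula holds). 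Then, writing $\alpha_j=e^{i\arg(\p(2,j))}$ and $\p(3,j)=e^{i\theta_j}$, the vector $1-s_j\alpha_j$ has argument $\theta_j$; applying the law of sines to the triangle with vertices $0$, $1$, $s_j\alpha_j$ — whose side from $0$ to $1$ has length $1$, whose side from $1$ to $s_j\alpha_j$ points in direction $-\p(3,j)$ (so makes the angle $\theta_j$ with the real axis) and whose side from $0$ to $s_j\alpha_j$ makes angle $\arg(\p(2,j))$ — gives $s_j=\sin(\theta_j)/\sin(\psi_j)$ with $\psi_j=\theta_j-\arg(\p(2,j))=\arg(\p(3,j))-\arg(\p(2,j))$, exactly as in the statement. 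This is the same similar-triangles argument as Lemma~\ref{triangle}, just recorded with the classical law-of-sines formula instead of the bare existence-and-uniqueness claim.

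For step (b), given step (a) the forward direction is immediate: if $\M$ is realizable then by uniqueness of the canonical realization its norms are the $s_j$ above, and each $\p(j,k)=\ph(\det M_{j,k})=\ph(s_k\p(2,k)-s_j\p(2,j))$, which is precisely~\eqref{nonreal2}. Conversely, if~\eqref{nonreal2} holds for every pair $j,k\in\{4,\ldots,n\}$, then defining $M$ by the formula above with $s_j=\sin(\theta_j)/\sin(\psi_j)$ gives a matrix all of whose $2\times2$ minors have the prescribed phases: the minors involving a column of $I$ are handled by construction of the phases $\alpha_j$ and the $s_j$ (this is where the defining equations $\p(3,j)=\ph(1-s_j\alpha_j)$ are used, and they hold by the law-of-sines computation), and the minors involving two columns among $\{4,\ldots,n\}$ are exactly condition~\eqref{nonreal2}. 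Hence $\p_M=\p^\rho$ up to the global phase ambiguity, so $\M^\rho$ — and therefore $\M$, by Corollary~\ref{realspacecanonical} — is realizable. The main obstacle I anticipate is purely bookkeeping: getting all the $(-1)$-signs from Lemma~\ref{minorsign} and the canonical-form conventions of Definition~\ref{canonicarealiz} to line up so that the clean equations $\p(3,j)=\ph(1-s_j\alpha_j)$ and~\eqref{nonreal2} emerge without stray signs, and making sure the formula $s_j=\sin(\theta_j)/\sin(\psi_j)$ carries the correct sign (so that $s_j>0$), which pins down the branch of the arguments $\theta_j,\psi_j$; the ``not essentially oriented'' hypothesis is what keeps the relevant sines nonzero so that the formula is well-defined.
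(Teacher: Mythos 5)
Your proposal matches the paper's proof: the paper likewise builds the unique canonical realization whose entry phases are read off from $\p(2,j)$ and whose norms are $\sin(\theta_j)/\sin(\psi_j)$, obtained from the Triangle Lemma applied to $\p(3,j)=\ph(1-s_j\alpha_j)$ (the law-of-sines computation you make explicit), and then notes that equation~\eqref{nonreal2} is precisely $\p(j,k)=\ph([j,k]_M)$, so realizability is equivalent to it holding for all $j,k\in\{4,\ldots,n\}$. Your sign-tracking and the explicit converse (checking the minors involving columns $1,2,3$ hold by construction) are more detailed than the paper's two-line argument but follow the same route.
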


\begin{proof} If $\M$ is realizable, a unique canonical realization $M$ of $\M$ can be constructed using $\p(2,j)$  and $\p(3,k)$ for all $j,k>3$ where $\frac{\sin(\theta_j)}{\sin(\psi_j)}$ is the norm of $M_{1,j}$. Equation \hyperef{nonreal2} is  $\p(j,k)=\ph([j,k]_M)$ and must hold by definition of realizability.  
\end{proof}

The (5-point) realizability criteria for rank 2 uniform phased matroids  is given in \cite{BKR-G} depends on cross ratios, so it is not  generalizable to phased matroids with rank greater than 2. However, using the Triangle Lemma as its foundation, the realizability criteria from Proposition \ref{rank2realizability} is easily extended to higher ranked phased matroids. 

\begin{thm}\label{realizability}Let $\M$ be a rank $r$, uniform, not essentially oriented phased matroid on $[n]$ with canonical phirotope $\p$. 
Let $(I|N)$ be a matrix such that $\ph((I|N)_{i,j})=\p(\hat i,j)$ and $|((I|N)_{i,j})|$ is determined by $\p(\hat{\{ i,r\}},r+1,j)$ as in Theorem \hyperef{uniformthm}. 
$\M$ is realizable if and only if for all $\lambda\in[n]^r,~\p(\lambda)=\ph([\lambda]_{(I|N)})$.
\end{thm}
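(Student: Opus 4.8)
The plan is to establish both directions of the biconditional, with the forward direction being essentially trivial and the reverse direction being where the real content lies. For the forward direction: if $\M$ is realizable, then by Corollary~\ref{realspacecanonical} (together with Theorem~\ref{equivrealspace}) there is a realization in canonical form, and by Corollary~\ref{phirophase} and Corollary~\ref{phaseminor} the phases of its entries are forced to be exactly $\ph((I|N)_{i,j}) = \p(\hat i, j)$ (up to the sign $(-1)^{r-i}$ already absorbed into the statement's indexing), while by the construction in the proof of Theorem~\ref{uniformthm} the norms $|(I|N)_{i,j}|$ are forced to be exactly the values determined by the triangular equations involving $\p(\hat{\{i,r\}}, r+1, j)$. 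Since the matrix $(I|N)$ in the statement is built to have precisely these phases and norms, it \emph{is} that canonical realization; hence $\p_{(I|N)} = \p$, which says exactly that $\p(\lambda) = \ph([\lambda]_{(I|N)})$ for all $\lambda \in [n]^r$.

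For the reverse direction, suppose $\p(\lambda) = \ph([\lambda]_{(I|N)})$ for all $\lambda \in [n]^r$. First I would observe that $(I|N)$ is a well-defined matrix: the entries are nonzero (uniformity forces every $\p(\hat i, j) \neq 0$, so the phases are genuine elements of $\S$, and the norms coming out of the Triangle Lemma are strictly positive by Lemma~\ref{triangle}, since $\M$ not essentially oriented guarantees the relevant $\alpha_{i,j} \neq \pm 1$ needed to make the triangular equations solvable). By construction $\p_{(I|N)}$ is a phirotope (Proposition~\ref{phiromatrix}). The hypothesis says $\p_{(I|N)}(\lambda) = \ph([\lambda]_{(I|N)}) = \p(\lambda)$ for every $\lambda$, so $\p_{(I|N)} = \p$ as functions $[n]^r \to \S \cup \{0\}$. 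But then $(I|N)$ is by definition a realization of the phased matroid $\{\alpha \p \mid \alpha \in \S\} = \M$, so $\M$ is realizable.

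The main obstacle — and the only place that needs care — is making the statement's phrase ``$|((I|N)_{i,j})|$ is determined by $\p(\hat{\{i,r\}}, r+1, j)$ as in Theorem~\ref{uniformthm}'' precise enough that the matrix $(I|N)$ is genuinely unambiguous, and checking that this construction actually goes through for \emph{every} $(i,j)$ with $1 \le i \le r-1$, $r+1 \le j \le n$. This is where one must invoke the four-case analysis in the proof of Theorem~\ref{uniformthm}: in each case a specific $\lambda$-minor of the putative realization equals a $2\times 2$ minor, yielding a triangular equation with $s_{i,j}$ the unique unknown, and the Triangle Lemma (Lemma~\ref{triangle}) pins down $s_{i,j}$. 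The subtlety is that the case one lands in — and hence which earlier-determined norms $s_{i,k}$, $s_{m,j}$ one uses — depends on the pattern of which $\alpha_{i,j}$ equal $\pm 1$; one should note this is well-founded because ``not essentially oriented'' guarantees at least one non-real entry exists, so the recursion bottoms out. I would present the forward direction in one short paragraph, then handle the reverse direction by: (i) noting $(I|N)$ is well-defined and its entries nonzero, (ii) recalling from Theorem~\ref{uniformthm} that the norms are uniquely forced, (iii) invoking Proposition~\ref{phiromatrix} to get that $\p_{(I|N)}$ is a phirotope, and (iv) concluding $\p_{(I|N)} = \p$ from the hypothesis, hence realizability.
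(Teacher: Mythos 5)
Your proposal is correct and follows essentially the same route as the paper: build the unique candidate matrix $(I|N)$ forced by the canonical phirotope via the construction in Theorem \ref{uniformthm}, observe that the stated condition makes $(I|N)$ a realization, and that realizability forces the canonical realization to coincide with $(I|N)$ so the condition must hold. The paper's own proof is just a terser statement of this argument; your additional remarks on well-definedness of the norms and the case analysis only make explicit what the paper leaves implicit.
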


\begin{proof}Given a uniform phased matroid with a phirotope $\p$ in canonical form, it is always possible to construct a potential canonical realization $(I|N)$ of $\M$ following the construction in the proof of Theorem \hyperef{uniformthm}.  If $~\p(\lambda)=\ph([\lambda]_{(I|N)})$ for all $\lambda\in[n]^r,$ then $(I|N)\in\R(\M)$. Otherwise, $\M$ is not realizable.
 \end{proof}

 \section{Conclusion}
 We have shown that in comparison to oriented matroids, uniform phased matroids can have remarkable simple realization space, and we can answer the realizability question for uniform not essentially oriented phased matroids in polynomial time.  
 In light of a new umbrella theory that encompasses oriented and phased matroids called $F$-matroids, in our future work, we hope to classify  which other hyperfields contain the important property described by  the triangle lemma. 


 \fontsize{12}{12pt} \selectfont
\bibliographystyle{amsplain}  
\bibliography{references-1.bib}

\end{document}